\newcommand{\C}{\mathbb{C}}
\newcommand{\ZZ}{\mathbb{Z}}
\newcommand{\QQ}{\mathbb{Q}}
\newcommand{\NN}{\mathbb{N}}
\newcommand{\OO}{\mathcal O}
\newcommand{\MM}{\mathcal M}
\DeclareMathOperator{\gr}{Gr}
\DeclareMathOperator{\ima}{Im}
\newcommand{\rom}{\romannumeral}
\newcommand\undermat[2]{
  \makebox[0pt][l]{$\smash{\underbrace{\phantom{
    \begin{matrix}#2\end{matrix}}}_{\text{$#1$}}}$}#2}
\newtheorem{convention}{Conventions}
\newtheorem{nonumbering}{Theorem}
\newtheorem{nonumberingc}{Corollary}
 \journalname{Abh. Math. Semin. Univ. Hambg.}
\begin{document}

\title{Hard Lefschetz for Chow groups of generalized Kummer varieties}

\author{Robert Laterveer}

\institute{CNRS - IRMA, Universit\'e de Strasbourg \at
              7 rue Ren\'e Descartes \\
              67084 Strasbourg cedex\\
              France\\
              \email{{\tt laterv@math.unistra.fr}}   }

\date{Received: date / Accepted: date}

\maketitle

\begin{abstract} The main result of this note is a hard Lefschetz theorem for the Chow groups of generalized Kummer varieties. The same argument also proves hard Lefschetz for Chow groups of Hilbert schemes of abelian surfaces. As a consequence, we obtain new information about certain pieces of the Chow groups of generalized Kummer varieties, and Hilbert schemes of abelian surfaces.
The proofs are based on work of Shen--Vial and Fu--Tian--Vial on multiplicative Chow--K\"unneth decompositions. 
\end{abstract}

\keywords{Algebraic cycles \and Chow groups \and motives \and hyperk\"ahler varieties \and generalized Kummer varieties \and abelian varieties \and hard Lefschetz \and Bloch--Beilinson conjectures \and splitting property \and multiplicative 
Chow--K\"unneth decomposition \and algebraic equivalence \and smash--nilpotence}

\subclass{14C15, 14C25, 14C30, 14K99  }

\section{Introduction}

The object of this note is to prove the statement announced in its title.
Let $X$ be a smooth projective variety over $\C$.
We will be interested in the Chow groups $A^i(X):=CH^i(X)_{\QQ}$ (of codimension $i$ cycles on $X$ with rational coefficients modulo rational equivalence).

Thanks to work of Beauville \cite{Beau}, 
the Chow group of an abelian variety $A$ splits into pieces
  \[ A^i(A)=\bigoplus_j A^i_{(j)}(A) \ ,\]
  where $A^i_{(j)}(A):=\{ a\in A^i(A) \ \vert\ [n]^\ast(a)=n^{2i-j}a\ \forall n\in\ZZ\}$ (here $[n]\colon A\to A$ denotes the multiplication by $n$ morphism).
   This makes the Chow ring $A^\ast(A)$ into a bigraded ring $A^\ast_{(\ast)}(A)$. It is known \cite{DM} this splitting is induced by a Chow--K\"unneth decomposition (in the sense of Definition \ref{ck}). 
   The piece $A^i_{(j)}(A)$ is expected to be isomorphic to the graded
  $\gr^j_F A^i(A)$ of the conjectural Bloch--Beilinson filtration.
  
  Thanks to work of K\"unnemann, there is a ``hard Lefschetz'' result for these pieces of the Chow groups of abelian varieties:
  
  \begin{theorem}[K\"unnemann \cite{Kun}]\label{kun} Let $A$ be an abelian variety of dimension $g$. Let $h\in A^1(A)$ be a symmetric ample class. Then there are isomorphisms
   \[   \cdot h^{g-2i+j}\colon\ \  A^i_{(j)}(X)\ \xrightarrow{\cong}\ A^{g-i+j}_{(j)}(X)\ \ \ \hbox{for\ all\ } 0\le 2i-j\le g\ .\]  
   \end{theorem}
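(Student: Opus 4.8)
\emph{Reformulation as an $\mathfrak{sl}_2$--action.} Since $h$ is symmetric we have $[n]^\ast h=n^2h$ for all $n$, i.e. $h\in A^1_{(0)}(A)$. The Beauville decomposition is compatible with the intersection product, $A^i_{(j)}(A)\cdot A^{i'}_{(j')}(A)\subseteq A^{i+i'}_{(j+j')}(A)$ (this is immediate from the fact that $[n]^\ast$ is a ring homomorphism), so the Lefschetz operator $L:=h\cdot(-)$ sends $A^i_{(j)}(A)$ into $A^{i+1}_{(j)}(A)$: it raises $i$ by one and preserves $j$. Thus, for each fixed $j$, the finite direct sum $V_{(j)}:=\bigoplus_{i}A^i_{(j)}(A)$ is a $\ZZ$--graded $\QQ$--vector space (graded by $i$, with only finitely many nonzero pieces since $0\le i\le g$) on which $L$ acts as a degree--one raising operator, and the theorem is precisely the assertion that the iterate $L^{g-2i+j}$ is an isomorphism onto the piece symmetric to $A^i_{(j)}(A)$ about the middle degree. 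I would prove this by producing a Lefschetz $\mathfrak{sl}_2$--triple $(L,\Lambda,H)$ acting on $V_{(j)}$ and then invoking representation theory.

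\emph{Construction of the lowering operator.} There is no naive candidate for the dual operator $\Lambda$; instead I would build it from Beauville's Fourier transform $\mathcal F\colon A^\ast(A)\xrightarrow{\sim}A^\ast(\widehat A)$. Recall its two key features: it interchanges the intersection and Pontryagin products, and it restricts to isomorphisms $\mathcal F\colon A^i_{(j)}(A)\xrightarrow{\sim}A^{g-i+j}_{(j)}(\widehat A)$. Let $\widehat h\in A^1(\widehat A)$ be the symmetric ample class dual to the polarization defined by $h$ (using that, with $\QQ$--coefficients, the polarization isogeny $\phi_h$ identifies $A^\ast(A)$ with $A^\ast(\widehat A)$). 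I would then set, up to a suitable normalizing constant,
\[ \Lambda:=\mathcal F^{-1}\circ\bigl(\widehat h\cdot(-)\bigr)\circ\mathcal F . \]
Tracing the indices through $A^i_{(j)}(A)\to A^{g-i+j}_{(j)}(\widehat A)\to A^{g-i+j+1}_{(j)}(\widehat A)\to A^{i-1}_{(j)}(A)$ shows that $\Lambda$ lowers $i$ by one and preserves $j$, as a lowering operator must. Finally I would let $H$ act on $A^i_{(j)}(A)$ as multiplication by the integer $2i-j-g$, the ``cohomological degree minus the middle''.

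\emph{The $\mathfrak{sl}_2$--relations (the main obstacle).} The relations $[H,L]=2L$ and $[H,\Lambda]=-2\Lambda$ are immediate from the weight bookkeeping above, since $L$ (resp. $\Lambda$) shifts the value of $2i-j$ by $+2$ (resp. $-2$). The crux of the whole argument is the remaining relation
\[ [L,\Lambda]=H , \]
and this is where the work of K\"unnemann lies. In cohomology it is the classical Lefschetz $\mathfrak{sl}_2$--identity; the point is to upgrade it to an identity of operators on the \emph{Chow} groups, equivalently to an identity among algebraic correspondences in $A^\ast(A\times A)_\QQ$ under composition. I would establish it by computing the commutator through the Fourier transform, using the interchange of the two products together with the self--duality of the polarization class $h$ (which governs $\mathcal F\circ L\circ\mathcal F^{-1}$), and fixing the normalization of $\Lambda$ so that the scalar produced by these computations is exactly the weight $2i-j-g$ recorded by $H$. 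The essential subtlety, absent in the cohomological setting, is that every such identity must be proved modulo \emph{rational} rather than homological equivalence; this is the genuinely hard part.

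\emph{Conclusion by representation theory.} Granting the triple $(L,\Lambda,H)$, the theorem follows formally. The grading on $V_{(j)}$ is bounded, so each class $v\in V_{(j)}$ generates a finite--dimensional $\mathfrak{sl}_2$--submodule (applying $L$ or $\Lambda$ repeatedly eventually lands in the zero pieces); hence $V_{(j)}$ is a locally finite $\mathfrak{sl}_2$--module and, in characteristic zero, decomposes as a direct sum of finite--dimensional irreducibles. For each such irreducible the standard fact that $L^{k}$ carries the weight--$(-k)$ space isomorphically onto the weight--$(+k)$ space, summed over the decomposition, shows that $L^{-w}$ is an isomorphism from the $H$--weight--$w$ subspace onto the $H$--weight--$(-w)$ subspace whenever $w\le 0$. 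Taking $w=2i-j-g$ (so that $w\le 0$ is precisely the hypothesis $2i-j\le g$) gives $-w=g-2i+j$ and identifies the target weight space as $A^{g-i+j}_{(j)}(A)$, yielding exactly the asserted isomorphism $L^{g-2i+j}\colon A^i_{(j)}(A)\xrightarrow{\sim}A^{g-i+j}_{(j)}(A)$.
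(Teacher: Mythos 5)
Your argument is correct in outline, but it takes a genuinely different route from the paper's. The paper never constructs an $\mathfrak{sl}_2$--action on Chow groups: it deduces the statement (as Theorem \ref{main1}) from the general Theorem \ref{main0}, whose proof runs as follows. Cohomological hard Lefschetz together with the Lefschetz standard conjecture $B(A)$ (known for abelian varieties by Kleiman \cite{K}, \cite{K2}) produces an algebraic correspondence inverting $L^{g-2i+j}$ modulo homological equivalence, i.e.\ an isomorphism of homological motives $(A,\pi_{2i-j},0)\cong(A,\pi_{2g-2i+j},g-2i+j)$; Kimura finite--dimensionality \cite{Kim} and the nilpotence theorem upgrade this to an isomorphism of Chow motives; and the multiplicative Chow--K\"unneth structure together with $h\in A^1_{(0)}(A)$ identifies the induced map on Chow groups with $\cdot h^{g-2i+j}$, landing in the correct piece $A^{g-i+j}_{(j)}$. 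Your route is essentially K\"unnemann's original one \cite{Kun}: conjugate an intersection operator on $\widehat A$ by Beauville's Fourier transform to obtain the lowering operator, verify the $\mathfrak{sl}_2$--relations at the level of Chow correspondences, and conclude by representation theory. Your approach buys an explicit inverse and a full Lefschetz decomposition into primitive parts, with no appeal to finite--dimensionality or the standard conjectures; the paper's approach buys uniformity, since the same soft argument applies verbatim to Hilbert schemes of abelian surfaces and to generalized Kummer varieties, where no Fourier transform is available.

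The one place where your write--up is a plan rather than a proof is exactly the step you flag: the identity $[L,\Lambda]=H$ as an identity of operators on $A^\ast(A)$ (equivalently, of correspondences in $A^g(A\times A)$). All the surrounding material --- the bigraded ring structure from \cite{Beau}, the weight bookkeeping, local finiteness of $V_{(j)}$ (which holds because the $i$--grading is supported in $0\le i\le g$, so each vector generates a module spanned by the finitely many classes $\Lambda^aL^cv$), and the passage from finite--dimensional irreducibles to the isomorphism $L^{g-2i+j}$ --- is routine, so the entire content of the theorem is concentrated in that commutator. ``Computing it through the Fourier transform'' is the correct strategy (the inputs are Beauville's identities $\mathcal F_{\widehat A}\circ\mathcal F_A=(-1)^g[-1]^\ast$ and the exchange of intersection and Pontryagin products, which convert $\Lambda$ into a Pontryagin--product operator whose commutator with $L$ can then be evaluated piece by piece; a single normalizing constant does suffice, since the resulting scalar is linear in $2i-j$), but as written this step is a citation of \cite{Kun} rather than a proof. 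That is defensible for a theorem the paper itself attributes to K\"unnemann, but you should be aware that the paper's own argument is designed precisely to avoid this computation.
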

  
  A similar bigraded ring structure $A^\ast_{(\ast)}$ is expected to exist on the Chow ring of any hyperk\"ahler variety (i.e., a projective irreducible holomorphic symplectic manifold, cf. \cite{Beau0}, \cite{Beau1}). This expectation is related to Beauville's conjectural ``splitting property'' \cite{Beau3}. One series of hyperk\"ahler varieties where this has recently been verified is that of generalized Kummer varieties. Indeed, Fu--Tian--Vial \cite{FTV} prove that any generalized Kummer variety $X$ has a multiplicative Chow--K\"unneth decomposition (in the sense of \cite{SV}, cf. Definition \ref{mck}); this implies the Chow ring $A^\ast(X)$ has a bigraded ring structure $A^\ast_{(\ast)}(X)$. 
  
 It seems natural to ask whether these pieces $A^i_{(j)}(X)$ satisfy a hard Lefschetz theorem similar to Theorem \ref{kun}. The main result of this note answers this question affirmatively:
  
  \begin{nonumbering}[=theorem \ref{main3}] Let $X=K_m(A)$ be a generalized Kummer variety of dimension $2m$. Let $h\in A^1(X)$ be ample. Then intersection induces isomorphisms
     \[ \cdot h^{2m-2i+j}\colon\ \ A^i_{(j)}(X)\ \xrightarrow{\cong}\ A^{2m-i+j}_{(j)}(X)\ \ \ \hbox{for\ all\ } 0\le 2i-j\le 2m\ .\]
   \end{nonumbering}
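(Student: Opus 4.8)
The plan is to upgrade the hard Lefschetz isomorphism from cohomology to Chow groups by exploiting the fact that the motive of $X$ is of abelian type. First I would invoke the multiplicative Chow--K\"unneth decomposition of Fu--Tian--Vial: it produces the bigrading $A^\ast_{(\ast)}(X)$ together with the crucial multiplicativity $A^i_{(j)}(X)\cdot A^{i'}_{(j')}(X)\subseteq A^{i+i'}_{(j+j')}(X)$. Since $X$ is hyperk\"ahler one has $A^1(X)=A^1_{(0)}(X)$, so the ample class satisfies $h\in A^1_{(0)}(X)$, and therefore $\cdot h^n$ maps $A^i_{(j)}(X)$ into $A^{i+n}_{(j)}(X)$. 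A bookkeeping check on the Beauville indices shows that, on the Chow--K\"unneth summand $h^{2i-j}(X)$, the operator $\cdot h^{2m-2i+j}$ is exactly the iterated Lefschetz operator $L^{2m-k}$ with $k=2i-j$, sending the weight-$(2i-j)$ part to the weight-$(4m-2i+j)$ part while preserving the lower grading $j$. Thus the theorem reduces to the motivic hard Lefschetz statement $L^{2m-k}\colon h^k(X)\xrightarrow{\cong}h^{4m-k}(X)(2m-k)$ for all $k\le 2m$, followed by evaluation on the unit motive $\mathbf{1}$ and a translation of indices.

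\textbf{Abelian type and finite-dimensionality.} Next I would record that $h(X)$ is a direct summand of a sum of Tate twists of motives of abelian varieties: this is built into the Fu--Tian--Vial construction, the motive of a generalized Kummer variety being governed by that of $A$ and of the abelian subvarieties arising as fixed loci, and it is compatible with the refined Chow--K\"unneth grading. Consequently $h(X)$ is finite-dimensional in the sense of Kimura--O'Sullivan, and the Lefschetz standard conjecture holds for it; both properties are inherited from abelian varieties, where they are due to Kimura--O'Sullivan and to Lieberman--Kleiman respectively. These are precisely the inputs that make K\"unnemann's theorem work on $A$, and the purpose of this step is to transport that same toolkit to $X$.

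\textbf{The arbitrary ample class.} The main obstacle is that $h$ is an \emph{arbitrary} ample class, whereas K\"unnemann's theorem on $A$ requires a symmetric ample class; under the motivic decomposition $h$ need not correspond to symmetric ample classes on the abelian building blocks. To circumvent this I would argue intrinsically on $X$ rather than transport K\"unnemann's isomorphism. Classical hard Lefschetz gives that $L^{2m-k}$ is an isomorphism in cohomology for \emph{every} ample $h$; the Lefschetz standard conjecture for $h(X)$ then furnishes an algebraic dual Lefschetz operator $\Lambda$, so that the cohomological inverse of $L^{2m-k}$ is realized by an algebraic correspondence. Finally, finite-dimensionality converts this cohomological isomorphism into an isomorphism of Chow motives: the two composites of $L^{2m-k}$ with its algebraic dual differ from the identity by homologically, hence numerically, trivial endomorphisms of finite-dimensional motives, which are nilpotent by Kimura's theorem, so the composites are invertible and $L^{2m-k}$ is a motivic isomorphism. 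This is the step I expect to demand the most care, since one must verify that the algebraic dual operator interacts correctly with the Chow--K\"unneth projectors and preserves the multiplicative grading.

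\textbf{Conclusion.} With motivic hard Lefschetz in hand I would finish by applying $\mathrm{Hom}(\mathbf{1},-)$ and reading off the graded pieces: multiplicativity of the decomposition guarantees that $\cdot h^{2m-2i+j}$ respects the lower grading $j$, yielding the asserted isomorphism $A^i_{(j)}(X)\xrightarrow{\cong}A^{2m-i+j}_{(j)}(X)$ for all $0\le 2i-j\le 2m$. The same argument applies verbatim to Hilbert schemes $A^{[m]}$ of an abelian surface, whose motives are likewise of abelian type and carry a multiplicative Chow--K\"unneth decomposition, which accounts for the companion statement announced in the abstract.
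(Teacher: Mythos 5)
Your proposal is correct and follows essentially the same route as the paper: the author likewise proves a general statement combining the Fu--Tian--Vial MCK decomposition, finite-dimensionality of the motive, and the Lefschetz standard conjecture for $X$, uses $B(X)$ to produce an algebraic inverse to the cohomological Lefschetz operator, upgrades the homological isomorphism to one of Chow motives via Kimura nilpotence, and uses $h\in A^1_{(0)}(X)$ together with multiplicativity to identify the resulting map with $\cdot h^{2m-2i+j}$ on the graded pieces. Your decision to argue intrinsically on $X$ rather than transport K\"unnemann's theorem from the abelian building blocks is exactly the choice the paper makes as well.
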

  
  Given that generalized Kummer varieties are ``built out of'' abelian varieties, one might expect Theorem \ref{main3} is proven by reducing to Theorem \ref{kun}.
  This is probably feasible, yet this is {\em not\/} the way we proceed. Instead, we prefer to start from scratch and present a general statement (Theorem \ref{main0}), which applies at once to abelian varieties, to Hilbert schemes of abelian surfaces, and to generalized Kummer varieties.

  As a consequence of Theorem \ref{main3}, we find that certain pieces $A^i_{(j)}(X)$ vanish:
  
  \begin{nonumberingc}[=Corollary \ref{cor3}] Let $X=K_m(A)$ be a generalized Kummer variety. Then
    \[ A^i_{(i)}(X)=0\ \ \ \hbox{for\ all\ }i\ \hbox{odd}\ .\]
   \end{nonumberingc}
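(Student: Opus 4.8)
The plan is to specialise the hard Lefschetz isomorphism of Theorem \ref{main3} to the single value $j=i$ and thereby transport the statement to the group of $0$-cycles, where the desired vanishing is accessible. With $j=i$ the constraint $0\le 2i-j\le 2m$ becomes $0\le i\le 2m$, which covers the only interesting range (for $i>2m$ one has $A^i(X)=0$ for dimension reasons). The isomorphism then reads
\[ \cdot\, h^{2m-i}\colon\ A^i_{(i)}(X)\ \xrightarrow{\ \cong\ }\ A^{2m}_{(i)}(X)\ ,\]
so it suffices to show that the top Chow group $A^{2m}(X)=CH_0(X)_{\QQ}$ carries no odd graded pieces, i.e. that $A^{2m}_{(j)}(X)=0$ for every odd $j$.

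For the second step I would use that the bigrading is the one induced by the multiplicative Chow--K\"unneth decomposition (Definition \ref{mck}): denoting by $\pi^{X}_{k}$ the Chow--K\"unneth projector onto the degree-$k$ summand $\mathfrak{h}^{k}(X)$ of the motive, one has $A^{2m}_{(j)}(X)=(\pi^{X}_{4m-j})_\ast\, A^{2m}(X)$, the contribution of $\mathfrak{h}^{4m-j}(X)$ to the $0$-cycles. The heuristic making the vanishing plausible is Hodge-theoretic: a nonzero contribution of $\mathfrak{h}^{4m-j}(X)$ to $CH_0(X)$ would force the Hodge number $h^{2m,\,2m-j}(X)$ to be nonzero, and Serre duality on the $2m$-fold $X$ identifies $H^{2m,2m-j}(X)\cong H^{0,j}(X)^{\vee}$. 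Since $X$ is irreducible holomorphic symplectic, every global holomorphic form is a power of the symplectic form, whence $H^{0}(X,\Omega^{j}_X)=0$ for odd $j$ (for $j=1$ this is just $b_1(X)=0$). Thus $h^{0,j}(X)=0$ and the relevant Hodge piece vanishes for all odd $j$.

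The main obstacle is upgrading this cohomological vanishing to the genuine statement $A^{2m}_{(j)}(X)=0$ for odd $j$, since a priori $(\pi^{X}_{4m-j})_\ast CH_0(X)$ could be nonzero even when the corresponding Hodge piece is not. Here I would exploit that the motive of a generalized Kummer variety is of abelian type: by the explicit multiplicative Chow--K\"unneth decomposition of Fu--Tian--Vial, $\mathfrak{h}(X)$ is cut out of Tate twists of motives of powers of $A$, and the odd-degree summands $\mathfrak{h}^{k}(X)$ are all built from $\mathfrak{h}^{1}(A)$. A twist $\mathfrak{h}^{1}(A)(-r)$ contributes to $A^{2m}(X)$ only through $A^{2m-r}(\mathfrak{h}^{1}(A))=A^{2m-r}_{(2(2m-r)-1)}(A)$, and by Beauville's (known) range for $0$-cycles on the abelian surface $A$ this group is nonzero only in codimension $1$, i.e. only if $4m-j=4m-1$; but $\mathfrak{h}^{4m-1}(X)=0$ by simple-connectedness of $X$, exactly matching the vanishing $h^{0,1}(X)=0$ above. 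Granting this analysis one obtains $A^{2m}_{(j)}(X)=0$ for all odd $j$, and combining with the first-step isomorphism yields $A^i_{(i)}(X)=0$ for all odd $i$. I expect this transfer — from the vanishing of odd holomorphic forms to the vanishing of the odd part of $CH_0(X)$ — to be the only substantial point, the remainder being a direct application of Theorem \ref{main3}.
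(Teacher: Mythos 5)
Your first step coincides exactly with the paper's proof: setting $j=i$ in Theorem \ref{main3} gives the isomorphism $\cdot h^{2m-i}\colon A^i_{(i)}(X)\xrightarrow{\cong}A^{2m}_{(i)}(X)$, so everything reduces to the vanishing $A^{2m}_{(i)}(X)=0$ for odd $i$. At that point the paper simply invokes the theorem of Lin \cite{Lin}, which asserts precisely this vanishing for zero--cycles on a generalized Kummer variety. You instead attempt to prove the zero--cycle statement yourself, and this is where there is a genuine gap.

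The structural claim you rely on --- that the odd--degree summands $h^{k}(X)$ of the Fu--Tian--Vial decomposition are (sums of) Tate twists $h^1(A)(-r)$ of the $h^1$ of the abelian \emph{surface} $A$ --- is unjustified, and for $m\ge 2$ it is not true in the form your argument needs. The motive of $K_m(A)$ is built from Tate twists of motives of abelian varieties of dimension up to $2m$ (the fixed loci inside $A^{m+1}_0$), so its odd part contains summands of the form $h^{2k+1}(B)(-r)$ with $\dim B>2$ and $k\ge 1$; these are not Tate twists of $h^1(A)$. Such a summand contributes to $A^{2m}(X)$ through $A^{p}\bigl(h^{2k+1}(B)\bigr)=A^{p}_{(2p-2k-1)}(B)$, and Beauville's unconditional vanishing only applies when $2p-2k-1>p$ or $2p-2k-1<p-\dim B$; in the remaining range (e.g.\ $A^{3}_{(3)}(B)$ for $\dim B=4$) these groups have no a priori reason to vanish. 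Hence your reduction to ``only $j=1$ can occur, and $b_1(X)=0$'' does not go through, and the Hodge--theoretic heuristic $h^{0,j}(X)=0$ for $j$ odd cannot by itself be upgraded to the Chow--theoretic statement: that upgrade is exactly the content of Lin's theorem, a substantial result in its own right. The repair is simply to cite \cite{Lin} for $A^{2m}_{(i)}(X)=0$, $i$ odd, as the paper does; with that citation in place your argument becomes identical to the paper's.
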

   
   We can also say something about the ``extremal'' pieces:
   
    \begin{nonumberingc}[=Corollaries \ref{cor5} and \ref{cor6}] Let $X$ be a generalized Kummer variety, or the Hilbert scheme $A^{[m]}$ of an abelian surface $A$. Then
    \[ A^i_{(i)}(X)\ \subset\ A^i_{alg}(X)\  , \ \ \ \  \ \    A^{i+1}_{(i)}(X)\ \subset\ A^i_{\otimes}(X) \ \ \ \hbox{for\ all\ }i>0\ .\]
    (Here, $A^i_{alg}$ and $A^i_\otimes$ denote the subgroup of algebraically trivial, rep. smash--nilpotent cycles.)
   \end{nonumberingc}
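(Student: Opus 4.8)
The plan is to reduce both statements, by means of the Hard Lefschetz isomorphisms of Theorem \ref{main3}, to the two extreme ends of the Chow groups of $X$ --- the zero-cycles and the one-cycles --- where the assertions are either elementary or already available in the literature. Two formal inputs make the reduction work. First, the Lefschetz operators $\cdot h^{k}$ are induced by algebraic correspondences, and since Theorem \ref{main3} is obtained at the level of Chow motives, the inverse of each such isomorphism is again induced by a correspondence. Second, the action $\Gamma_\ast(z)=p_{X\ast}\bigl(\Gamma\cdot p_{X}^\ast z\bigr)$ of any correspondence $\Gamma$ preserves both algebraic equivalence and smash-nilpotence: pullback, intersection with a fixed cycle and pushforward respect $\sim_{alg}$, and if $z^{\times N}=0$ then $(\Gamma_\ast z)^{\times N}=(\Gamma\times\cdots\times\Gamma)_\ast\bigl(z^{\times N}\bigr)=0$. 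Hence it suffices to establish each inclusion for the image of the relevant Lefschetz operator and then transport it back along the (algebraic) inverse.

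For the first inclusion I would set $j=i$ in Theorem \ref{main3}. The constraint $0\le 2i-j=i\le 2m$ holds for $1\le i\le 2m$, while $A^i(X)=0$ for $i>2m$, so for every $i>0$ there is an isomorphism
\[ \cdot h^{2m-i}\colon\ A^i_{(i)}(X)\ \xrightarrow{\ \cong\ }\ A^{2m}_{(i)}(X). \]
By definition of the bigrading through the Chow--K\"unneth projectors, a class in $A^{2m}_{(i)}(X)$ has cohomology class supported in $H^{4m-i}(X)$, whereas a zero-cycle has class in $H^{4m}(X)$; for $i>0$ these degrees differ, so $A^{2m}_{(i)}(X)$ is homologically trivial. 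On a connected smooth projective variety every homologically trivial zero-cycle --- i.e. one of degree zero --- is algebraically trivial, since it is a sum of differences of points and any two points are joined by a curve; thus $A^{2m}_{(i)}(X)\subset A^{2m}_{alg}(X)$. Transporting back along the algebraic inverse of $\cdot h^{2m-i}$ gives $A^i_{(i)}(X)\subset A^i_{alg}(X)$.

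For the second inclusion I keep $j=i$ but move to codimension $i+1$. Now the constraint reads $0\le 2(i+1)-i=i+2\le 2m$, valid for $1\le i\le 2m-2$, and Theorem \ref{main3} yields
\[ \cdot h^{2m-i-2}\colon\ A^{i+1}_{(i)}(X)\ \xrightarrow{\ \cong\ }\ A^{2m-1}_{(i)}(X), \]
an isomorphism onto a piece of the one-cycles; the single boundary value $i=2m-1$ lands in zero-cycles and is already covered by the previous paragraph, while $A^{i+1}(X)=0$ for $i\ge 2m$. As before, the target $A^{2m-1}_{(i)}(X)$ is homologically trivial for $i>0$ (its classes would sit in $H^{4m-2-i}(X)$, not in $H^{4m-2}(X)$). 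At this point I invoke the known fact that homologically trivial one-cycles are smash-nilpotent --- established for abelian varieties (Voevodsky, Voisin, Kahn--Sebastian) and valid for $X$ because its Chow motive is finite-dimensional, being cut out by correspondences from that of a product of abelian varieties. Hence $A^{2m-1}_{(i)}(X)\subset A^{2m-1}_{\otimes}(X)$, and transporting back gives $A^{i+1}_{(i)}(X)\subset A^{i+1}_{\otimes}(X)$. (The first inclusion, too, lands in the smash-nilpotent cycles, since algebraically trivial cycles are smash-nilpotent by Voevodsky.)

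I expect the main obstacle to be the justification that the inverse Lefschetz operator is algebraic: this is precisely the point where one must use that Theorem \ref{main3} is a statement about Chow motives, so that $\cdot h^{k}$ is an isomorphism of motives whose inverse is again a morphism, i.e. a correspondence --- without this, knowing the image piece is algebraically trivial or smash-nilpotent says nothing about $A^i_{(i)}(X)$ or $A^{i+1}_{(i)}(X)$ themselves. A secondary delicate point is the one-cycle input of the third paragraph: one needs the smash-nilpotence of homologically trivial one-cycles to hold on $X$ itself and to be compatible with the bigrading $A^\ast_{(\ast)}(X)$, which is where finite-dimensionality of the motive of $X$ enters. Once these two ingredients are in place, the reductions to zero- and one-cycles are routine.
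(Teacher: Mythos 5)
Your proposal is correct and follows essentially the same route as the paper: both use the hard Lefschetz isomorphism (whose inverse is induced by a correspondence, coming from the isomorphism of Chow motives in the proof of Theorem \ref{main0}) to reduce to zero-cycles and one-cycles, where one invokes that homologically trivial zero-cycles are algebraically trivial and that homologically trivial one-cycles on these varieties are smash-nilpotent --- the latter being exactly the delicate input you flag, which the paper handles by citing Sebastian for abelian varieties and the argument of Vial's Theorem 3.17 (i.e.\ the motive being of abelian type, not merely finite-dimensional) for the Hilbert scheme and Kummer cases. No substantive differences.
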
   
   
   It would be interesting to extend Theorem \ref{kun} (and the corollaries) to other varieties that have a multiplicative 
   Chow--K\"unneth decomposition.

\begin{convention} In this article, the word {\sl variety\/} will refer to a reduced irreducible scheme of finite type over $\C$. A {\sl subvariety\/} is a (possibly reducible) reduced subscheme which is equidimensional. 

{\bf All Chow groups will be with rational coefficients}: we will denote by $A_j(X)$ the Chow group of $j$--dimensional cycles on $X$ with $\QQ$--coefficients; for $X$ smooth of dimension $n$ the notations $A_j(X)$ and $A^{n-j}(X)$ are used interchangeably. 


The notations 
  \[ A^j_{hom}(X)\ ,\ A^j_{AJ}(X)\ ,\  A^j_{alg}(X)\ ,\ A^j_\otimes(X) \] 
  will be used to indicate the subgroups of homologically trivial, resp. Abel--Jacobi trivial, resp. algebraically trivial, resp. smash-nilpotent cycles.
For a morphism $f\colon X\to Y$, we will write $\Gamma_f\in A_\ast(X\times Y)$ for the graph of $f$.
The contravariant category of Chow motives (i.e., pure motives with respect to rational equivalence as in \cite{Sc}, \cite{MNP}) will be denoted $\MM_{\rm rat}$.



We will write $H^j(X)$ 
to indicate singular cohomology $H^j(X,\QQ)$.

\end{convention}

\section{Preliminary}

\subsection{Finite--dimensional motives}

We refer to \cite{Kim}, \cite{An}, \cite{Iv}, \cite{J4}, \cite{MNP} for the definition of finite--dimensional motive. 
An essential property of varieties with finite--dimensional motive is embodied by the nilpotence theorem:

\begin{theorem}[Kimura \cite{Kim}]\label{nilp} Let $X$ be a smooth projective variety of dimension $n$ with finite--dimensional motive. Let $\Gamma\in A^n(X\times X)_{\QQ}$ be a correspondence which is numerically trivial. Then there is $N\in\NN$ such that
     \[ \Gamma^{\circ N}=0\ \ \ \ \in A^n(X\times X)_{}\ .\]
\end{theorem}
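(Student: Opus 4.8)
The plan is to recognize this as Kimura's nilpotence theorem and to prove it by the standard ``trace plus Cayley--Hamilton'' mechanism that is available in any $\QQ$--linear pseudo--abelian rigid tensor category, applied here to $\MM_{\rm rat}$. First I would regard $\Gamma\in A^n(X\times X)$ as an endomorphism of the Chow motive $\mathfrak h(X)=(X,\Delta_X,0)$, composition of correspondences being composition of endomorphisms; the desired conclusion $\Gamma^{\circ N}=0$ is then literally the assertion that $\Gamma$ is a nilpotent element of $\mathrm{End}_{\MM_{\rm rat}}(\mathfrak h(X))$. By hypothesis $\mathfrak h(X)$ is finite--dimensional, so I may write $\mathfrak h(X)=M_+\oplus M_-$ with $M_+$ evenly and $M_-$ oddly finite--dimensional, say $\wedge^{a+1}M_+=0$ and $\mathrm{Sym}^{b+1}M_-=0$.

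The engine of the proof is the categorical trace $\mathrm{tr}\colon \mathrm{End}(\mathfrak h(X))\to \mathrm{End}(\mathbf 1)=\QQ$. Two observations drive everything. First, the numerically trivial endomorphisms form a two--sided ideal under composition, so every power $\Gamma^{\circ k}$ is again numerically trivial. Second, $\mathrm{tr}$ is computed as an intersection number and therefore factors through numerical equivalence; hence $\mathrm{tr}(\Gamma^{\circ k})=0$ for every $k\ge 1$. In other words, all the ``power--sum'' traces of $\Gamma$ vanish.

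Next I would feed this into the finite--dimensionality. In the evenly finite--dimensional case, the relation $\wedge^{a+1}M_+=0$ forces a Cayley--Hamilton identity
\[ \Gamma_{+}^{\circ(a+1)}-\lambda^1\,\Gamma_{+}^{\circ a}+\cdots+(-1)^a\lambda^a\,\Gamma_{+}=0\ ,\]
where $\Gamma_+$ denotes the endomorphism induced on $M_+$ and $\lambda^i=\mathrm{tr}(\wedge^i\Gamma_+)$ (the top coefficient $\lambda^{a+1}$ being the trace of an endomorphism of the zero object, hence $0$). By Newton's identities each $\lambda^i$ is a universal polynomial in the power--sum traces $\mathrm{tr}(\Gamma_+^{\circ j})$, which all vanish by the previous step; therefore $\lambda^i=0$ for $i\ge 1$ and $\Gamma_+^{\circ(a+1)}=0$. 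The oddly finite--dimensional case is entirely parallel, replacing exterior by symmetric powers (the symmetric--power traces being the complete homogeneous symmetric functions of the eigenvalues, again polynomials in the power sums), and yields $\Gamma_-^{\circ(b+1)}=0$.

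It remains to pass from nilpotence of the diagonal parts $\Gamma_+,\Gamma_-$ to nilpotence of $\Gamma$ itself, accounting for the off--diagonal components in $\mathrm{Hom}(M_+,M_-)$ and $\mathrm{Hom}(M_-,M_+)$. I expect this bookkeeping to be the main obstacle: one has to organize the Cauchy decomposition of $\wedge^k(M_+\oplus M_-)$, which interchanges $\wedge$ on the even part with $\mathrm{Sym}$ on the odd part, so that the whole of $\mathfrak h(X)$ satisfies a single polynomial identity, and then verify that the vanishing of the power--sum traces still annihilates every coefficient. Granting this, the resulting identity together with the vanishing power sums gives $\Gamma^{\circ N}=0$ for some $N$ bounded in terms of $a$ and $b$, which is exactly the assertion of the theorem.
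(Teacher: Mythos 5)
First, a remark on the comparison you were asked to make: the paper does not prove this statement at all --- it is imported verbatim from Kimura's article \cite{Kim} and used as a black box --- so the only question is whether your reconstruction of Kimura's argument is complete. The part you carry out in detail is correct and is the standard mechanism: view $\Gamma$ as an endomorphism of $h(X)$, split $h(X)=M_+\oplus M_-$ with $\wedge^{a+1}M_+=0$ and $\mathrm{Sym}^{b+1}M_-=0$, observe that the numerically trivial correspondences form a two--sided ideal so that every power $\Gamma^{\circ k}$ is numerically trivial and hence $\mathrm{tr}(\Gamma^{\circ k})=0$ (the categorical trace being an intersection number), and convert the vanishing of the exterior (resp.\ symmetric) power into a Cayley--Hamilton identity whose coefficients are, by Newton's identities over $\QQ$, polynomials in the vanishing power sums. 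This does kill the two diagonal blocks.

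The gap is exactly where you flag it, and the repair you propose does not work as stated. There is no single Cayley--Hamilton identity for the mixed object coming from exterior powers: by the Cauchy formula $\wedge^N(M_+\oplus M_-)=\bigoplus_{p+q=N}\wedge^pM_+\otimes\wedge^qM_-$, and since only $\mathrm{Sym}$, not $\wedge$, eventually annihilates the odd part, this object never vanishes for large $N$; the Schur functor that does vanish on $M_+\oplus M_-$ is the $(a+1)\times(b+1)$ rectangle, from which extracting a usable polynomial identity is considerably more delicate. Kimura's actual route for the off--diagonal components is different: (i) any morphism $f$ from an evenly to an oddly finite--dimensional object (or vice versa) satisfies $f^{\otimes k}=0$ for $k>ab$, because $f^{\otimes k}$ is $\Sigma_k$--equivariant and a partition of $k>ab$ cannot simultaneously have at most $a$ rows (needed for $S_\lambda M_+\neq 0$) and at most $b$ columns (needed for $S_\lambda M_-\neq 0$); (ii) by rigidity, a tensor--nilpotent endomorphism is composition--nilpotent, so the composites of the two off--diagonal blocks are nilpotent. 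Even granting (i) and (ii), one cannot conclude blockwise: already $\pi_+\circ\Gamma^{\circ 2}\circ\pi_+$ is a sum of two nilpotent endomorphisms of $M_+$, and a sum of nilpotents need not be nilpotent. The standard way out is to prove the stronger statement that the whole ideal of numerically trivial endomorphisms is nilpotent, using that a nil ideal of bounded nilpotence exponent in a $\QQ$--algebra is nilpotent (Nagata--Higman); see \cite{An}. Steps (i), (ii) and this last ideal--theoretic assembly are the substantive content missing from your proposal.
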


 Actually, the nilpotence property (for all powers of $X$) could serve as an alternative definition of finite--dimensional motive, as shown by a result of Jannsen \cite[Corollary 3.9]{J4}.
Conjecturally, any variety has finite--dimensional motive \cite{Kim}. We are still far from knowing this, but at least there are quite a few non--trivial examples.

\subsection{MCK decomposition}

\begin{definition}[Murre \cite{Mur}]\label{ck} Let $X$ be a smooth projective variety of dimension $n$. We say that $X$ has a {\em CK decomposition\/} if there exists a decomposition of the diagonal
   \[ \Delta_X= \pi_0+ \pi_1+\cdots +\pi_{2n}\ \ \ \hbox{in}\ A^n(X\times X)\ ,\]
  such that the $\pi_i$ are mutually orthogonal idempotents and $(\pi_i)_\ast H^\ast(X)= H^i(X)$.
  
  (NB: ``CK decomposition'' is short--hand for ``Chow--K\"unneth decomposition''.)
\end{definition}

\begin{remark} The existence of a CK decomposition for any smooth projective variety is part of Murre's conjectures \cite{Mur}, \cite{J2}. 
\end{remark}

\begin{definition}[Shen--Vial \cite{SV}]\label{mck} Let $X$ be a smooth projective variety of dimension $n$. Let $\Delta_X^{sm}\in A^{2n}(X\times X\times X)$ be the class of the small diagonal
  \[ \Delta_X^{sm}:=\bigl\{ (x,x,x)\ \vert\ x\in X\bigr\}\ \subset\ X\times X\times X\ .\]
  An MCK decomposition is a CK decomposition $\{\pi_i\}$ of $X$ that is {\em multiplicative\/}, i.e. it satisfies
  \[ \pi_k\circ \Delta_X^{sm}\circ (\pi_i\times \pi_j)=0\ \ \ \hbox{in}\ A^{2n}(X\times X\times X)\ \ \ \hbox{for\ all\ }i+j\not=k\ .\]
  
 (NB: ``MCK decomposition'' is short--hand for ``multiplicative Chow--K\"unneth decomposition''.) 
  \end{definition}
  
  \begin{remark} The small diagonal (seen as a correspondence from $X\times X$ to $X$) induces the {\em multiplication morphism\/}
    \[ \Delta_X^{sm}\colon\ \  h(X)\otimes h(X)\ \to\ h(X)\ \ \ \hbox{in}\ \MM_{\rm rat}\ .\]
 Suppose $X$ has a CK decomposition
  \[ h(X)=\bigoplus_{i=0}^{2n} h^i(X)\ \ \ \hbox{in}\ \MM_{\rm rat}\ .\]
  By definition, this decomposition is multiplicative if for any $i,j$ the composition
  \[ h^i(X)\otimes h^j(X)\ \to\ h(X)\otimes h(X)\ \xrightarrow{\Delta_X^{sm}}\ h(X)\ \ \ \hbox{in}\ \MM_{\rm rat}\]
  factors through $h^{i+j}(X)$.
  It follows that if $X$ has an MCK decomposition, then setting
    \[ A^i_{(j)}(X):= (\pi^X_{2i-j})_\ast A^i(X) \ ,\]
    one obtains a bigraded ring structure on the Chow ring: that is, the intersection product sends $A^i_{(j)}(X)\otimes A^{i^\prime}_{(j^\prime)}(X) $ to  $A^{i+i^\prime}_{(j+j^\prime)}(X)$.
    It is expected that for any $X$ with an MCK decomposition, one has
    \[ A^i_{(j)}(X)\stackrel{??}{=}0\ \ \ \hbox{for}\ j<0\ ,\ \ \ A^i_{(0)}(X)\cap A^i_{hom}(X)\stackrel{??}{=}0\ ;\]
    this is related to Murre's conjectures B and D \cite{Mur}.

  The property of having an MCK decomposition is severely restrictive, and is closely related to Beauville's ``(weak) splitting property'' \cite{Beau3}. For more ample discussion, and examples of varieties with an MCK decomposition, we refer to \cite[Chapter 8]{SV}, as well as \cite{V6}, \cite{SV2}, \cite{FTV}.
    \end{remark}

In this note, we will rely on the following results:

\begin{theorem}[Vial \cite{V6}]\label{hilbk} Let $S$ be an abelian surface, and let $X=S^{[m]}$ be the Hilbert scheme of length $m$ subschemes of $S$. Then $X$ has an MCK decomposition.
\end{theorem}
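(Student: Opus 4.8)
The plan is to transport a multiplicative decomposition from the symmetric products of $S$ up to the Hilbert scheme $S^{[m]}$, using the motivic decomposition of de Cataldo--Migliorini as the bridge. First I would record that the abelian surface $S$ itself carries an MCK decomposition (Definition \ref{mck}). For an abelian variety the symmetric Deninger--Murre Chow--K\"unneth decomposition---under which the pieces $A^i_{(j)}$ coincide with Beauville's pieces---is multiplicative, reflecting the compatibility of the Fourier transform with the intersection product; this is part of the theory of Shen--Vial. Since a product of varieties with an MCK decomposition again has one, and the construction can be carried out $\Sy$-equivariantly, each symmetric product $S^{(\lambda)}=\prod_i S^{(a_i)}$ attached to a partition $\lambda=(1^{a_1}2^{a_2}\cdots)\vdash m$ (of length $\ell(\lambda)=\sum_i a_i$) inherits a canonical MCK decomposition.

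Next I would invoke the de Cataldo--Migliorini isomorphism
   \[ h(S^{[m]})\ \cong\ \bigoplus_{\lambda\vdash m} h\bigl(S^{(\lambda)}\bigr)\bigl(\ell(\lambda)-m\bigr)\ \ \ \hbox{in}\ \MM_{\rm rat}\ , \]
realized by an explicit family of correspondences supported on the incidence loci (nested Hilbert schemes) associated to the strata of the Hilbert--Chow morphism $S^{[m]}\to S^{(m)}$. Pushing the MCK projectors of the various $S^{(\lambda)}$ through these correspondences and summing produces a candidate system $\{\pi_i\}$ on $S^{[m]}$. Checking that $\{\pi_i\}$ is a genuine Chow--K\"unneth decomposition in the sense of Definition \ref{ck}---mutually orthogonal idempotents inducing the K\"unneth projectors on $H^\ast(S^{[m]})$---is the comparatively formal part, amounting to a bookkeeping of how the de Cataldo--Migliorini correspondences compose.

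The main obstacle is multiplicativity: one must verify
   \[ \pi_k\circ \Delta_{S^{[m]}}^{sm}\circ(\pi_i\times\pi_j)=0\ \ \ \hbox{for}\ i+j\not=k\ . \]
I would first establish this identity modulo homological equivalence, where the ring structure of $H^\ast(S^{[m]})$ is given explicitly by the Nakajima/Lehn--Sorger description. The crux is that the cup product, expressed through the incidence correspondences, respects the grading coming from the $\Sy$-equivariant decompositions of the $S^{(\lambda)}$; here one uses crucially that $S$ is an abelian surface, so that the ``defect'' classes entering the Lehn--Sorger formulas lie in the expected pieces $A^1_{(1)}(S)$ and $A^2_{(0)}(S)$ and thus carry the correct bidegree. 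This homological compatibility is where the real computation lives.

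Finally, to lift the displayed identity from homological equivalence to rational equivalence, I would exploit finite-dimensionality. The motive of $S$ is finite-dimensional, since abelian varieties are Kimura-finite, and $h(S^{[m]})$ is a direct summand of a motive built from powers of $S$; hence $S^{[m]}$ has finite-dimensional motive. By the nilpotence theorem (Theorem \ref{nilp}), a numerically trivial self-correspondence is nilpotent, which allows one to correct the homologically-correct system $\{\pi_i\}$ into an honest MCK decomposition on the nose, absorbing the homologically trivial error terms. Controlling precisely this interaction of the de Cataldo--Migliorini correspondences with the small diagonal $\Delta_{S^{[m]}}^{sm}$, and ensuring the nilpotent corrections preserve both orthogonality and multiplicativity, is the delicate point on which the argument turns.
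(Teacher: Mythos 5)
The paper does not prove this statement; it simply cites \cite[Theorem 1]{V6} (and case (A) of \cite[Theorem 7.9]{FTV}), so your proposal has to be judged against those proofs. Your overall scaffolding --- MCK for the abelian surface, transport through the de Cataldo--Migliorini isomorphism $h(S^{[m]})\cong\bigoplus_\lambda h(S^{(\lambda)})(\ell(\lambda)-m)$, with the multiplicativity check as the hard part --- is indeed the skeleton of Vial's argument. But the way you propose to carry out the hard part contains a genuine gap, and it sits exactly where the theorem's content lives.

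The problem is the final step, ``establish multiplicativity modulo homological equivalence, then lift to rational equivalence via finite-dimensionality and the nilpotence theorem.'' First, the homological version of the identity $\pi_k\circ\Delta^{sm}_X\circ(\pi_i\times\pi_j)=0$ for $i+j\neq k$ is automatic for \emph{any} CK decomposition: cup product is graded, so the relevant K\"unneth component of the small diagonal in $H^{\ast}(X\times X\times X)$ vanishes whenever $i+j\neq k$. So the computation you identify as ``where the real computation lives'' is vacuous, and no information about $S$ being abelian enters there. Second, and decisively, the lift does not work. The error term $E=\pi_k\circ\Delta^{sm}_X\circ(\pi_i\times\pi_j)$ is a homologically trivial class in $A^{4m}(X\times X\times X)$, i.e.\ a homologically trivial \emph{morphism} $h^i(X)\otimes h^j(X)\to h^k(X)$; Kimura finite-dimensionality tells you that homologically trivial \emph{endomorphisms} are nilpotent under composition, which lets you lift idempotents and their orthogonality, but it gives no mechanism to make a single homologically trivial morphism between distinct summands vanish, nor does perturbing the $\pi_i$ by homologically trivial corrections help, since that changes $E$ only by further homologically trivial terms. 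This is precisely why having an MCK decomposition is a restrictive property rather than a formal consequence of having a CK decomposition plus finite-dimensionality. The actual proofs work directly modulo rational equivalence: Vial reduces the multiplicativity of the induced projectors on $S^{[m]}$ to explicit Chow-theoretic identities on powers of $S$ involving the incidence correspondences and the Beauville decomposition of the abelian surface, while Fu--Tian--Vial establish a ring isomorphism in $\MM_{\rm rat}$ between $h(S^{[m]})$ and the orbifold motive of $[S^m/\Sy_m]$ whose product visibly respects the grading. Either route requires genuine cycle-level input that your sketch bypasses.
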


\begin{proof} This is (part of) \cite[Theorem 1]{V6}. It is also reproven (from a slightly different viewpoint) as case (A) of \cite[Theorem 7.9]{FTV}.
  \end{proof}

\begin{theorem}[Fu--Tian--Vial \cite{FTV}]\label{ftv} Let $X$ be a generalized Kummer variety. Then $X$ has an MCK decomposition.
\end{theorem}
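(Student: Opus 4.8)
The plan is to deduce the MCK on $X=K_m(A)$ from the MCK on the Hilbert scheme, i.e.\ from Theorem \ref{hilbk} applied to the surface $A$ with $m$ replaced by $m+1$. Write $\sigma\colon A^{[m+1]}\to A$ for the summation morphism (Hilbert--Chow followed by the group law), so that $X=\sigma^{-1}(O_A)$, the fibre over the origin. First I would record the standard fact that pulling $\sigma$ back along multiplication by $m+1$ trivialises this isotrivial fibration: there results a finite \'etale Galois cover $q\colon A\times X\to A^{[m+1]}$, sending $(a,\eta)$ to the translate of $\eta$ by $a$, with group $G=A[m+1]$ acting by $t\cdot(a,\eta)=(a+t,\,T_{-t}\eta)$ (here $T_{-t}$ is translation by $-t$, which preserves $X$ since $t$ is $(m+1)$--torsion). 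As $A$ carries its canonical Beauville--Deninger--Murre CK, which is known to be multiplicative, the idea is to transport the multiplicative structure of $A^{[m+1]}$ along $q$ to $A\times X$ and then split off the abelian factor.

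Since $q$ is finite \'etale, $h(A^{[m+1]})$ is the $G$--invariant direct summand of $h(A\times X)=h(A)\otimes h(X)$, cut out by the projector $\tfrac{1}{|G|}\sum_{t\in G}\Gamma_t$, the $G$--action being induced by $t\cdot(a,\eta)=(a+t,T_{-t}\eta)$. The aim is to promote the MCK of $A^{[m+1]}$, together with the canonical MCK of $A$, to a product--type MCK $\pi_k=\sum_{i+j=k}\pi^A_i\times\pi^X_j$ on $A\times X$. Granting this, multiplicativity of the $A$--factor lets one read the product MCK relation in the $X$--direction alone, yielding $\pi^X_k\circ\Delta_X^{sm}\circ(\pi^X_i\times\pi^X_j)=0$ for all $i+j\ne k$; thus $\{\pi^X_j\}$ is an MCK for $X$, as desired.

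The delicate point---and the main obstacle---is precisely this promotion. Pulling idempotents back along an \'etale cover produces $G$--averaged correspondences, and $h(A^{[m+1]})$ recovers only the trivial--isotypic part of $h(A\times X)$, so one must combine the known motive of $A$ with the descent of the small diagonal and then verify that the pieces reassemble into honest, mutually orthogonal Chow--K\"unneth projectors whose small--diagonal obstruction vanishes in the Chow group, not merely numerically. This is exactly where finite--dimensionality (Theorem \ref{nilp}) enters, to upgrade a numerically trivial obstruction correspondence to a rationally trivial one. The route actually taken in \cite{FTV} is different but addresses the same crux: they prove the motivic hyperk\"ahler resolution conjecture in this case, identifying $h(X)$, as a commutative algebra object of $\MM_{\rm rat}$, with the orbifold motive of the quotient $[A_0/\Sy_{m+1}]$, where $A_0=\ker(A^{m+1}\xrightarrow{\ \mathrm{sum}\ }A)$ is an abelian variety of dimension $2m$; the MCK is then read off from the $\Sy_{m+1}$--equivariant canonical multiplicative CK of $A_0$ together with the age grading. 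Either way, the essential difficulty is to control the multiplicative, small--diagonal part of the structure at the level of rational equivalence.
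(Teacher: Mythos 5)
The paper does not actually prove this statement: its ``proof'' is the single line ``This is case (B) of \cite[Theorem 7.9]{FTV}'', so the only thing to compare your attempt against is the argument of the cited reference, which you yourself summarize at the end of your proposal.

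Your primary strategy --- trivialize the isotrivial fibration $s\colon A^{[m+1]}\to A$ by the finite \'etale Galois cover $q\colon A\times X\to A^{[m+1]}$ with group $G=A[m+1]$, and transport the MCK of $A^{[m+1]}$ (Theorem \ref{hilbk}) to a product MCK on $A\times X$ --- has a genuine gap exactly where you flag it, and the gap is not one that finite--dimensionality closes. The identification $h(A^{[m+1]})\cong\bigl(h(A)\otimes h(X)\bigr)^G$ only exhibits the Hilbert scheme's motive as the image of the averaging projector $\tfrac{1}{|G|}\sum_{t\in G}\Gamma_t$; the Chow--K\"unneth projectors of $A^{[m+1]}$ pulled up along $q$ are $G$--averaged correspondences on $A\times X$ that have no reason to be of product type $\sum_{i+j=k}\pi^A_i\times\pi^X_j$, and nothing in the data singles out a candidate CK decomposition of $h(X)$ at all: the non--invariant isotypic components of $h(A)\otimes h(X)$, which contain most of $h(X)$, are simply not seen by $h(A^{[m+1]})$. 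Theorem \ref{nilp} can upgrade a \emph{numerically} trivial obstruction to a nilpotent one once you already have explicit candidate projectors and a well--posed identity to verify, but it cannot manufacture the projectors or the compatibility of the small diagonal with a decomposition you have not constructed. So the first two paragraphs do not constitute a proof. Your closing paragraph, by contrast, correctly describes what \cite{FTV} actually does --- prove the motivic hyperk\"ahler resolution conjecture for $K_m(A)$, identifying $h(X)$ as an algebra object of $\MM_{\rm rat}$ with the orbifold motive of $[A_0/\Sy_{m+1}]$ for $A_0=\ker(A^{m+1}\to A)$, and read off the MCK from the canonical multiplicative decompositions of the abelian varieties $A_0^{J}$ indexed by the inertia, with the age shifts --- but this is a report on the reference rather than an independent argument; as a proof it reduces to the same citation the paper gives. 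Net assessment: the statement is correctly attributed and your description of the actual proof route is accurate, but the self--contained argument you propose does not go through as written.
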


\begin{proof} This is case (B) of \cite[Theorem 7.9]{FTV}.
\end{proof}

\section{A general result}

\begin{theorem}\label{main0} Let $X$ be a smooth projective variety of dimension $g$, and let $h\in A^1(X)$ be an ample class. Assume the following:

\noindent
(\rom1) $X$ has an MCK decomposition $\{\pi_i^X\}$;

\noindent
(\rom2) $X$ has finite--dimensional motive;

\noindent
(\rom3) the standard Lefschetz conjecture $B(X)$ holds;

\noindent
(\rom4) the class $h$ is in $A^1_{(0)}(X)$.

Then there are isomorphisms
  \[ \cdot h^{g-2i+j}\colon\ \ A^i_{(j)}(X)\ \xrightarrow{\cong}\ A^{g-i+j}_{(j)}(X)\ \ \ \hbox{for\ all\ } 0\le 2i-j\le g\ .\]
  \end{theorem}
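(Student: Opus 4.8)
The plan is to lift the classical Hard Lefschetz isomorphism from cohomology up to the level of Chow motives, and then to read off the statement on Chow groups by passing to the summand $h^{2i-j}(X)$.

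First I would reformulate the statement motivically. For the MCK decomposition $\{\pi_i^X\}$ one has $A^i_{(j)}(X)=(\pi^X_{2i-j})_\ast A^i(X)=A^i\bigl(h^{2i-j}(X)\bigr)$, the Chow group of the summand $h^k(X):=(X,\pi_k^X)$ with $k=2i-j$. Since $h\in A^1_{(0)}(X)$ by hypothesis (\rom4), the class $h$ is a morphism $\mathbb{1}\to h^2(X)(1)$, and cup product with $h$ factors as the composite
\[ h(X)=h(X)\otimes\mathbb{1}\xrightarrow{\ide\otimes h}h(X)\otimes h^2(X)(1)\xrightarrow{\Delta_X^{sm}}h(X)(1)\ .\]
By multiplicativity of the decomposition (Definition \ref{mck}), $\Delta_X^{sm}$ carries $h^k(X)\otimes h^2(X)$ into $h^{k+2}(X)$, so the Lefschetz operator $L:=\cdot h$ is a morphism of motives sending $h^k(X)$ into $h^{k+2}(X)(1)$; in particular $\cdot h$ maps $A^i_{(j)}(X)$ into $A^{i+1}_{(j)}(X)$, preserving $j$. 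It therefore suffices to prove that for $0\le k\le g$ the iterate
\[ L^{g-k}\colon\ h^k(X)\ \to\ h^{2g-k}(X)(g-k)\]
is an isomorphism in $\MM_{\rm rat}$: applying $A^i(-)$ and tracking the Tate twist then gives precisely $\cdot h^{g-2i+j}\colon A^i_{(j)}(X)\xrightarrow{\cong}A^{g-i+j}_{(j)}(X)$ with $k=2i-j$.

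The crux is upgrading the cohomological isomorphism to a motivic one. On cohomology $L^{g-k}$ induces $H^k(X)\to H^{2g-k}(X)$, which is the classical Hard Lefschetz isomorphism and hence bijective. By the standard Lefschetz conjecture $B(X)$ (hypothesis (\rom3)), the inverse of Hard Lefschetz is induced by an algebraic correspondence; restricting it to the relevant summands yields a morphism $\Lambda\colon h^{2g-k}(X)(g-k)\to h^k(X)$ in $\MM_{\rm rat}$ such that $\Lambda\circ L^{g-k}$ and $L^{g-k}\circ\Lambda$ act as the identity on cohomology. Consequently the self-correspondences
\[ \Lambda\circ L^{g-k}-\pi^X_k\ \in\ \operatorname{End}\bigl(h^k(X)\bigr)\ ,\qquad L^{g-k}\circ\Lambda-\pi^X_{2g-k}\ \in\ \operatorname{End}\bigl(h^{2g-k}(X)\bigr)\]
are homologically, hence numerically, trivial. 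As $X$ has finite--dimensional motive (hypothesis (\rom2)), the nilpotence theorem (Theorem \ref{nilp}) shows both are nilpotent, so $\Lambda\circ L^{g-k}$ and $L^{g-k}\circ\Lambda$ are invertible (inverting $\pi^X_k-n$ by the finite geometric series $\sum_{r\ge0}n^{\circ r}$). Hence $L^{g-k}$ has a two--sided inverse and is an isomorphism of Chow motives.

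Finally I would conclude by applying the functor $A^i(-)$ to the isomorphism $L^{g-k}\colon h^k(X)\xrightarrow{\cong}h^{2g-k}(X)(g-k)$: unwinding the definitions, the resulting isomorphism of Chow groups is exactly multiplication by $h^{g-k}=h^{g-2i+j}$ from $A^i_{(j)}(X)$ onto $A^{g-i+j}_{(j)}(X)$, the index bookkeeping matching because $k=2i-j$ and $2g-k=2(g-i+j)-j$. The main obstacle is the middle step: it is the combination of $B(X)$ (to produce an algebraic inverse modulo homological equivalence) with finite--dimensionality (to absorb the homologically trivial error term by nilpotence) that promotes the purely cohomological Hard Lefschetz to an isomorphism of Chow motives. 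Hypothesis (\rom4), that $h$ has pure grade $(0)$, is essential in the first step, since it guarantees that $L$ is a morphism of motives of the correct bidegree, so that its powers connect the single summands $h^k(X)$ and $h^{2g-k}(X)$ rather than mixing different pieces.
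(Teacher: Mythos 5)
Your proposal is correct and follows essentially the same route as the paper: cohomological hard Lefschetz, the Lefschetz standard conjecture $B(X)$ to produce an algebraic inverse modulo homological equivalence, the nilpotence theorem for finite--dimensional motives to upgrade to an isomorphism of Chow motives, and hypothesis (\rom4) together with multiplicativity to identify the resulting map on Chow groups with $\cdot h^{g-2i+j}$. The only (harmless) difference is organizational --- you use (\rom4) at the outset to make $L$ a graded morphism of motives, whereas the paper first inverts $\pi^X_{2g-2i+j}\circ L^{g-2i+j}\circ\pi^X_{2i-j}$ and strips the projectors at the end.
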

  
 \begin{proof} The cohomological hard Lefschetz theorem states that cup--product induces an isomorphism
   \[ L^{g-2i+j}\colon\ \ H^{2i-j}(X)\ \xrightarrow{\cong}\ H^{2g-2i+j}(X)\ ,\ \ \  0\le 2i-j\le g\ .\]
 By assumption (\rom3), there exists a correspondence $C^{g-2i+j}\in A^{2i-j}(X\times X)$ inducing the inverse to $L^{g-2i+j}$, i.e.
  \begin{equation}\label{isoh} \begin{split}   &C^{g-2i+j}\circ \pi^X_{2g-2i+j}\circ L^{g-2i+j}\circ \pi^X_{2i-j} = \pi^X_{2i-j}\ \ \ \hbox{in}\ H^{2g}(X\times X)\ ,\\
                                                                      &L^{g-2i+j}\circ \pi^X_{2i-j} \circ  C^{g-2i+j}\circ \pi^X_{2g-2i+j} = \pi^X_{2g-2i+j} \ \ \ \hbox{in}\ H^{2g}(X\times X)\ .\\
                                                                \end{split}\end{equation}      
                                                                      
  As is well--known, this implies there is an isomorphism of homological motives
   \[  L^{g-2i+j}\colon\ \ \ (X,\pi^X_{2i-j},0))\ \xrightarrow{\cong}\ (X,\pi^X_{2g-2i+j}, g-2i+j)\ \ \ \hbox{in}\ \MM_{\rm hom}\ ,\ \ \  0\le 2i-j\le g\ \]
   (with inverse given by $C^{g-2i+j}$).
   Assumption (\rom2) allows to upgrade to an isomorphism of Chow motives
   \begin{equation}\label{ch}  L^{g-2i+j}\colon\ \ \ (X,\pi^X_{2i-j},0))\ \xrightarrow{\cong}\ (X,\pi^X_{2g-2i+j}, g-2i+j)\ \ \ \hbox{in}\ \MM_{\rm rat}\ ,\ \ \  0\le 2i-j\le g\ \end{equation} 
    (with inverse given by $C^{g-2i+j}$).    
   Taking Chow groups of the motives on both sides of (\ref{ch}), we find an isomorphism
   \[ (\pi^X_{2g-2i+j}\circ L^{g-2i+j}\circ \pi^X_{2i-j})_\ast\colon \    A^i_{(j)}(X)\ \xrightarrow{\cong}\ A^{g-i+j}_{(j)}(X)\ .\]
   
   It remains to note that under assumption (\rom4), we have
   \[     ( L^{g-2i+j} )_\ast  =    (\pi^X_{2g-2i+j}\circ L^{g-2i+j})_\ast =    (\pi^X_{2g-2i+j}\circ L^{g-2i+j}\circ \pi^X_{2i-j})_\ast\colon \   A^i_{(j)}(X)\ \xrightarrow{}\ A^{g-i+j}_{}(X)\ ,\]
   since (because of the bigraded ring structure) we have an inclusion 
     \[ ( L^{g-2i+j} )_\ast A^i_{(j)}(X) =  \ima\Bigl( A^i_{(j)}(X)\xrightarrow{\cdot h^{g-2i+j}} A^{g-i+j}(X)\Bigr) \ \subset \ A^{g-i+j}_{(j)}(X)=(\pi^X_{2g-2i+j})_\ast A^{g-i+j}(X)\ .\]   
    In conclusion, we have proven there is an isomorphism
    \[ \cdot h^{g-2i+j}=   ( L^{g-2i+j} )_\ast\colon\ \ \ A^i_{(j)}(X)\ \xrightarrow{\cong}\ A^{g-i+j}_{(j)}(X)\ , \ \ \  0\le 2i-j\le g \ ,\]
    with inverse given by $(\pi^X_{2i-j}\circ C^{g-2i+j})_\ast$.
                
        \end{proof}

\section{Abelian varieties}

\begin{theorem}[K\"unnemann \cite{Kun}]\label{main1} Let $A$ be an abelian variety of dimension $g$. Let $h\in A^1(A)$ be an ample symmetric class. 
 Then there are isomorphisms
    \[ \cdot h^{g-2i+j}\colon\ \ A^i_{(j)}(X)\ \xrightarrow{\cong}\ A^{g-i+j}_{(j)}(X)\ \ \ \hbox{for\ all\ } 0\le 2i-j\le g\ .\]
  \end{theorem}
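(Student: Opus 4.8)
The plan is to deduce this statement directly from the general result Theorem \ref{main0}: rather than reprove hard Lefschetz by hand, I would simply verify that an abelian variety $A$ of dimension $g$, together with a symmetric ample class $h\in A^1(A)$, satisfies all four hypotheses (\rom1)--(\rom4), and then invoke Theorem \ref{main0} with $X=A$. This reduces the problem to collecting four facts about abelian varieties that are already in the literature.

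First I would address (\rom1) and (\rom2). For (\rom1), abelian varieties carry the canonical symmetric Chow--K\"unneth decomposition of Deninger--Murre, for which $(\pi_i^A)_\ast$ is the projector onto the part where $[n]^\ast$ acts by $n^i$; that this decomposition is \emph{multiplicative} is exactly Beauville's theorem that the eigenspace decomposition $A^\ast_{(\ast)}(A)$ is a bigraded ring, so that the bigrading induced by the MCK decomposition agrees with the one appearing in the definition of $A^i_{(j)}(A)$. For (\rom2), finite--dimensionality is classical: after Shermenev and Deninger--Murre, $h(A)$ decomposes into symmetric and exterior powers of the odd motive $h^1(A)$, and an odd motive is always finite--dimensional in Kimura's sense; hence $h(A)$ is finite--dimensional.

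Next I would treat (\rom3) and (\rom4). The standard Lefschetz conjecture $B(A)$ is Lieberman's theorem (alternatively Kleiman): for an abelian variety the inverse of the Lefschetz operator is induced by an algebraic correspondence. Assumption (\rom4) is precisely where the \emph{symmetry} of $h$ is used: by the theorem of the cube a symmetric divisor class satisfies $[n]^\ast h = n^2 h$ for all $n\in\ZZ$, which, taking $i=1$ so that $2i-j=2$ forces $j=0$, is exactly the statement $h\in A^1_{(0)}(A)$. With (\rom1)--(\rom4) verified, Theorem \ref{main0} yields the isomorphisms $\cdot h^{g-2i+j}\colon A^i_{(j)}(A)\xrightarrow{\cong} A^{g-i+j}_{(j)}(A)$ for $0\le 2i-j\le g$, with inverse $(\pi^A_{2i-j}\circ C^{g-2i+j})_\ast$.

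Since each of (\rom1)--(\rom4) is a citation, there is no serious technical obstacle; the only points demanding care are the two compatibility issues hidden in (\rom1) and (\rom4). One must confirm that the bigrading $A^\ast_{(\ast)}(A)$ produced by the MCK formalism of Theorem \ref{main0} coincides with Beauville's $[n]^\ast$--eigenspace bigrading used to define $A^i_{(j)}(A)$, and one must be sure that it is symmetry of $h$ (not mere ampleness) that places $h$ in the degree--zero piece $A^1_{(0)}(A)$. Once these identifications are made explicit, the theorem follows formally from Theorem \ref{main0}.
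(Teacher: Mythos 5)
Your proposal follows exactly the same route as the paper: the paper's proof of Theorem \ref{main1} consists precisely of verifying the hypotheses of Theorem \ref{main0}, citing Shen--Vial for the MCK decomposition (\rom1), Kimura for finite--dimensionality (\rom2), and Kleiman for $B(A)$ (\rom3). Your additional care with (\rom4) --- noting that symmetry of $h$ gives $[n]^\ast h = n^2 h$ and hence $h\in A^1_{(0)}(A)$ --- is a point the paper leaves implicit, so your write-up is, if anything, slightly more complete.
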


\begin{proof} We will check the assumptions of Theorem \ref{main0} are met with. For assumptions (\rom2) and (\rom3), this is well--known (\cite{Kim} resp. \cite{K}, \cite{K2}). Assumption (\rom1) is handled in \cite[Example 8.3]{SV} and \cite[Example 8.5]{SV}.      
 \end{proof}

\begin{remark} It is worth pointing out that in proving Theorem \ref{main1}, we have not really used the ampleness of the divisor class $h$. The only property we have used is that cupping with powers of $h$ induces isomorphisms in cohomology
  \[ \cup h^{g-2i+j}\colon\ \ \ H^{2i-j}(A)\ \xrightarrow{\cong}\ H^{2g-2i+j}(A)\ ,\ \ \  0\le 2i-j\le g\ .\]
  It follows that Theorem \ref{main1} is true more generally for {\em lef line bundles\/}, in the sense of \cite{CM2} (by definition, a line bundle is lef when it is the pullback of an ample line bundle under a semismall morphism).
  
  Also, we obtain the following ``mixed version'' of Theorem \ref{main1}. Let $h_1,\ldots,h_{g-2i+j}\in A^1(A)$ be symmetric ample classes. Then, since there is an isomorphism
  \[ \cup h_1\cup\cdots \cup h_{g-2i+j}\colon\ \ \  H^{2i-j}(A)\ \xrightarrow{\cong}\ H^{2g-2i+j}(A)\ \]
  \cite[Theorem 1.3]{Cat}, the above argument implies there is also an isomorphism
  \[\cdot h_1\cdot \ldots \cdot h_{g-2i+j}\colon\ \ \ A^i_{(j)}(A)\ \xrightarrow{\cong}\ A_{(j)}^{g-i+j}(A)\ .\]  
  \end{remark}

\section{Hilbert schemes of abelian surfaces}

\begin{theorem}\label{main2} Let $A$ be an abelian surface, and let $X=A^{[m]}$ be the Hilbert scheme of length $m$ subschemes of $A$. Let $h\in A^1_{(0)}(X)$ be an ample class. Then there are isomorphisms
  \[  \cdot h^{2m-2i+j}\colon\ \ A^i_{(j)}(X)\ \xrightarrow{\cong}\ A^{2m-i+j}_{(j)}(X)\ \ \ \hbox{for\ all\ } 0\le 2i-j\le 2m\ .\]
\end{theorem}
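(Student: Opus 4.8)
The plan is to deduce Theorem~\ref{main2} as an immediate application of the general Theorem~\ref{main0}, exactly as Theorem~\ref{main1} was obtained. To invoke Theorem~\ref{main0} with $X=A^{[m]}$ and $g=2m$, I would verify the four hypotheses in turn. Hypothesis (\rom4) is already built into the statement, since we assume $h\in A^1_{(0)}(X)$. Hypothesis (\rom1), the existence of an MCK decomposition, is precisely Theorem~\ref{hilbk} (Vial), so nothing further is needed there. The entire content of the proof thus reduces to checking hypotheses (\rom2) and (\rom3) for Hilbert schemes of abelian surfaces.

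For hypothesis (\rom2), finite-dimensionality of the motive, I would argue that $A^{[m]}$ has finite-dimensional motive because its motive is built out of the motive of the abelian surface $A$. Concretely, the motive of the Hilbert scheme $A^{[m]}$ is a direct summand of a sum of (Tate twists of) motives of products $A^n$ (this is the standard consequence of the Göttsche--Soergel / de~Cataldo--Migliorini description of the motive of a Hilbert scheme of a surface in terms of symmetric products). Since $A$ has finite-dimensional motive (Kimura, as used for Theorem~\ref{main1}), each $A^n$ does too, and finite-dimensionality passes to direct summands and finite direct sums; hence $A^{[m]}$ has finite-dimensional motive.

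For hypothesis (\rom3), the standard Lefschetz conjecture $B(X)$, I would similarly reduce to the abelian surface. The Lefschetz standard conjecture is known for abelian varieties, hence for all powers $A^n$; and $B$ is known to be stable under the operations (direct summands, products, and passage along the de~Cataldo--Migliorini isomorphism) that express $h(A^{[m]})$ in terms of the $h(A^n)$. Alternatively, one may cite that $B$ holds for Hilbert schemes of surfaces whose motive satisfies $B$, which applies here. Either route establishes $B(A^{[m]})$. With all four hypotheses verified, Theorem~\ref{main0} delivers the claimed isomorphisms $\cdot h^{2m-2i+j}\colon A^i_{(j)}(X)\xrightarrow{\cong} A^{2m-i+j}_{(j)}(X)$ for $0\le 2i-j\le 2m$.

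The main obstacle, such as it is, lies in hypothesis (\rom3): while finite-dimensionality propagates cleanly through direct summands and sums, one must be careful that the Lefschetz standard conjecture $B$ is genuinely preserved under the motivic decomposition of the Hilbert scheme, and cite the correct stability statement (the subtlety being that $B$ is not known to be motivic in full generality, so one relies on its compatibility with the specific correspondences furnished by de~Cataldo--Migliorini). Once the literature is pinned down for this step, the rest is a formal verification of the bookkeeping, and the theorem follows with no new computation.
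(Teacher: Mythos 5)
Your proposal is correct and follows essentially the same route as the paper: Theorem~\ref{main0} is invoked with hypothesis (\rom1) supplied by Vial's Theorem~\ref{hilbk}, hypothesis (\rom4) built into the statement, and hypotheses (\rom2) and (\rom3) deduced from the de~Cataldo--Migliorini description of the motive of the Hilbert scheme of a surface in terms of products of the surface (the paper simply cites \cite{CM} for both). Your additional care about the propagation of the Lefschetz standard conjecture through that decomposition is a reasonable elaboration of what the paper leaves implicit.
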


\begin{proof} This is proven by appealing to Theorem \ref{main0}. The first assumption of Theorem \ref{main0} is satisfied by virtue of Vial's result \cite{V6} (cf. Theorem \ref{hilbk} above); assumptions (\rom2) and (\rom3) follow from \cite{CM}. 
  \end{proof}

\section{Generalized Kummer varieties}

 \begin{definition} Let $A$ be an abelian surface. For any $m\in\NN$, let 
  \[    \pi\colon A^{[m+1]}\ \to\ A^{(m+1)}\]
  denote the Hilbert--Chow morphism from the Hilbert scheme $A^{[m+1]}$ to the symmetric product $A^{(m+1)}$. Let $\sigma\colon A^{(m+1)}\to A$ denote the addition morphism. Consider the composition
  \[ s\colon A^{[m+1]}\ \xrightarrow{\pi}\ A^{(m+1)}\ \xrightarrow{\sigma}\ A\ .\]
  The generalized Kummer variety is defined as the fibre
  \[  K_m(A):=s^{-1}(0)\ .\]
The variety $K_m(A)$ is a hyperk\"ahler variety of dimension $2m$ \cite{Beau1}.
\end{definition}

\begin{theorem}\label{main3} Let $X=K_m(A)$ be a generalized Kummer variety. Let $h\in A^1_{}(X)$ be an ample class. Then there are isomorphisms
  \[  \cdot h^{2m-2i+j}\colon\ \ A^i_{(j)}(X)\ \xrightarrow{\cong}\ A^{2m-i+j}_{(j)}(X)\ \ \ \hbox{for\ all\ } 0\le 2i-j\le 2m\ .\]
\end{theorem}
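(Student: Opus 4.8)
The plan is to deduce Theorem \ref{main3} as an immediate application of the general result Theorem \ref{main0}, exactly as was done for abelian varieties (Theorem \ref{main1}) and Hilbert schemes (Theorem \ref{main2}). Thus the entire proof reduces to verifying the four hypotheses (\rom1)--(\rom4) of Theorem \ref{main0} for $X=K_m(A)$ with $g=2m$. First I would invoke Theorem \ref{ftv} (Fu--Tian--Vial) to obtain hypothesis (\rom1): every generalized Kummer variety carries an MCK decomposition $\{\pi_i^X\}$, which is precisely what supplies the bigraded pieces $A^i_{(j)}(X)$ appearing in the statement. This is the structural input that makes the whole question meaningful, and it is already available off the shelf.

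Next I would establish hypotheses (\rom2) and (\rom3), namely finite--dimensionality of the motive and the standard Lefschetz conjecture $B(X)$. The natural strategy is to exploit the fact that $K_m(A)$ is ``built out of'' the abelian surface $A$, for which both properties are classical (Kimura \cite{Kim} for finite--dimensionality, and the Lefschetz standard conjecture being known for abelian varieties via \cite{K}, \cite{K2}). Concretely, one relates the motive of $K_m(A)$ to the motive of $A$ and its powers: the motive of the Hilbert scheme $A^{[m]}$ is a direct summand of motives of the form $h(A^n)$ (up to Tate twists), by the description of Hilbert schemes of surfaces, and the generalized Kummer $K_m(A)$ is in turn cut out inside $A^{[m+1]}$ as a fibre of the summation morphism, so its motive is again expressible in terms of $h(A^n)$. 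Since finite--dimensionality and the standard conjecture $B$ are both stable under the operations of taking products, direct summands in the category of motives, and Tate twists, both properties descend to $K_m(A)$. I would cite the relevant computation of the motive of $K_m(A)$ in terms of that of $A$ (as carried out in \cite{FTV}, or alternatively deduce it from the Hilbert--scheme motive) rather than reprove it.

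Finally, hypothesis (\rom4) requires that the ample class $h$ lie in the degree--zero piece $A^1_{(0)}(X)$. Here there is a subtlety: unlike the statement of Theorem \ref{main2}, where this was imposed by hand on $h$, the statement of Theorem \ref{main3} takes $h$ to be an \emph{arbitrary} ample class. The point I would make is that for a generalized Kummer variety one automatically has $A^1(X)=A^1_{(0)}(X)$, i.e.\ the entire Picard group (with $\QQ$--coefficients) sits in degree $0$ of the Beauville--type grading. This is a feature of the MCK decomposition constructed in \cite{FTV}, reflecting the fact that $H^2$ is pure of weight corresponding to $j=0$ and that divisor classes are motivated by $h^2(X)$; in the bigraded language this says $A^1_{(j)}(X)=0$ for $j\ne 0$. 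With this observation, every ample $h$ satisfies (\rom4) for free, which is why the hypothesis on $h$ can be dropped in the final statement.

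I expect the main obstacle to be hypothesis (\rom4), or more precisely justifying that $A^1(X)=A^1_{(0)}(X)$ cleanly. The finite--dimensionality and standard--conjecture inputs are robust and follow formally from the corresponding facts for abelian varieties together with the motivic description of $K_m(A)$; the MCK decomposition is quoted. But pinning down that \emph{every} ample divisor lands in the $(0)$--piece requires a genuine (if short) argument about the low--degree part of the Chow--K\"unneth decomposition of $K_m(A)$, and this is the one place where a specific property of generalized Kummer varieties, beyond the four abstract hypotheses, is really used. Once that is settled, Theorem \ref{main0} applies verbatim with $g=2m$ and yields the claimed isomorphisms $\cdot\,h^{2m-2i+j}\colon A^i_{(j)}(X)\xrightarrow{\cong}A^{2m-i+j}_{(j)}(X)$ for all $0\le 2i-j\le 2m$.
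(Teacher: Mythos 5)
Your proposal is correct and follows essentially the same route as the paper: the proof is a direct application of Theorem \ref{main0}, with hypothesis (\rom1) supplied by Theorem \ref{ftv}, hypotheses (\rom2) and (\rom3) obtained from the motivic description of $K_m(A)$ in terms of abelian varieties (the paper cites \cite{FTV} or \cite{Xu} for this), and hypothesis (\rom4) holding automatically because $A^1(X)=A^1_{(0)}(X)$. The paper treats this last point as immediate rather than as a subtlety, but your justification of it is sound.
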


\begin{proof} The first three assumptions of Theorem \ref{main0} are satisfied (for assumption (\rom1) this is Fu--Tian--Vial's result (Theorem \ref{ftv} above), assumptions (\rom2) and (\rom3) follow from \cite{FTV} or \cite{Xu}). Assumption (\rom4) is immediate, because $A^1(X)=A^1_{(0)}(X)$.
  \end{proof}

  \section{Some corollaries}

   \begin{corollary}\label{cor4} Let $X$ be a hyperk\"ahler variety birational to a generalized Kummer variety $X^\prime$, and let $h\in A^1(X)$ be an ample class. Then there are isomorphisms
     \[  \cdot h^{2m-2i+j}\colon\ \ A^i_{(j)}(X)\ \xrightarrow{\cong}\ A^{2m-i+j}_{(j)}(X)\ \ \ \hbox{for\ all\ } 0\le 2i-j\le 2m\ .\]
\end{corollary}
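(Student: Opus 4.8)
The plan is to reduce the birational case to the established isomorphism for the generalized Kummer variety $X'$ itself (Theorem \ref{main3}), using the fact that birational hyperk\"ahler varieties have isomorphic Chow motives as algebra objects. First I would invoke the result of Rie\ss\ (building on Huybrechts), which states that birational hyperk\"ahler varieties $X$ and $X'$ have isomorphic Chow rings; more precisely, there is an isomorphism of Chow motives $h(X) \cong h(X')$ in $\MM_{\rm rat}$ that respects the multiplicative structure. Since the MCK decomposition and the induced bigrading $A^\ast_{(\ast)}$ are intrinsic to the motive-as-algebra, this isomorphism should match the bigraded pieces, yielding compatible isomorphisms
\[ A^i_{(j)}(X)\ \xrightarrow{\cong}\ A^i_{(j)}(X')\ \ \hbox{for\ all\ } i,j\ . \]

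The key subtlety is that the isomorphism $A^i_{(j)}(X) \cong A^i_{(j)}(X')$ is not, in general, compatible with the intersection against a \emph{fixed} ample class, because an ample $h$ on $X$ need not correspond to an ample class on $X'$ under the birational identification. However, the argument of Theorem \ref{main0} reveals that ampleness is never really needed: what is used is only that cupping with $h^{g-2i+j}$ induces an isomorphism in cohomology. So the strategy is to verify that for \emph{any} $h\in A^1(X)$ satisfying cohomological hard Lefschetz, the Chow-level statement follows. Concretely, I would first transport $h$ to a class $h'$ on $X'$ via the birational correspondence. Even though $h'$ may fail to be ample, Rie\ss's isomorphism identifies the cup-product action of $h$ on $H^\ast(X)$ with that of $h'$ on $H^\ast(X')$, and since $h$ is ample on $X$, the operator $\cup h^{g-2i+j}$ is an isomorphism in cohomology; hence so is $\cup (h')^{g-2i+j}$.

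I would then run the motivic argument of Theorem \ref{main0} directly on $X$: assumptions (\rom1), (\rom2), (\rom3) all hold for $X$ because they are preserved under birational equivalence of hyperk\"ahler varieties (the motive of $X$ is finite-dimensional and inherits an MCK decomposition from that of $X'$, and $B(X)$ follows from $B(X')$ since the standard conjectures are motivic invariants). Assumption (\rom4) is automatic because $A^1(X)=A^1_{(0)}(X)$ for a hyperk\"ahler variety. With these in hand, the cohomological hard Lefschetz for $h$ (just verified), combined with $B(X)$, produces an inverse correspondence $C^{g-2i+j}$, lifts to an isomorphism of Chow motives by finite-dimensionality, and descends to the desired isomorphism on $A^i_{(j)}(X)$, exactly as in the proof of Theorem \ref{main0}.

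The main obstacle I anticipate is establishing cleanly that the bigraded decomposition $A^\ast_{(\ast)}$ is preserved under the birational motive isomorphism, \emph{i.e.}\ that Rie\ss's correspondence matches the $\pi_i^X$ with the $\pi_i^{X'}$ rather than merely inducing \emph{some} ring isomorphism. This requires knowing that the MCK decomposition on $X$ constructed by Fu--Tian--Vial is the one transported from $X'$, and that the transported idempotents are genuinely Chow--K\"unneth (orthogonal, summing to $\Delta_X$, with the correct cohomological action). Once this compatibility is pinned down, the remainder is a direct application of Theorem \ref{main0} with the ampleness hypothesis relaxed to cohomological hard Lefschetz, precisely as noted in the remark following Theorem \ref{main1}.
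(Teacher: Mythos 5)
Your proposal is correct and follows essentially the same route as the paper: the paper's proof simply invokes Rie\ss's isomorphism of graded commutative $\QQ$--algebras $A^\ast(X)\cong A^\ast(X^\prime)$ to transport the MCK decomposition and the other hypotheses of Theorem \ref{main0} from $X^\prime$ to $X$, and then applies Theorem \ref{main0} to $X$ with its own ample class. The details you supply (that ampleness enters only through cohomological hard Lefschetz on $X$ itself, and that the bigrading on $X$ is by definition the transported one) are precisely what the paper leaves implicit in the phrase ``$X$ still enters into the set--up of Theorem \ref{main0}''.
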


\begin{proof} The point is that thanks to work of Rie\ss\, \cite{Rie}, there is an isomorphism of graded commutative $\QQ$--algebras
  \[ \phi_\ast\ \colon\ \ \ A^\ast(X)\ \xrightarrow{\cong}\ A^\ast(X^\prime)\ .\]
This implies that
$X$ still enters into the set--up of Theorem \ref{main0}. 
    \end{proof}

\begin{corollary}\label{cor3}  Let $X=K_m(A)$ be a generalized Kummer variety. Then
  \[ A^i_{(i)}(X)=0\ \ \ \hbox{for\ all\ } i\ \hbox{odd}\ .\]
  \end{corollary}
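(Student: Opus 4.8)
The plan is to transport the desired vanishing, via the hard Lefschetz isomorphism of Theorem~\ref{main3}, from intermediate codimension down to the group of zero--cycles, where the Beauville--type grading can be controlled more directly.

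First I would specialize Theorem~\ref{main3} to the diagonal case $j=i$. Since $A^i(X)=0$ for $i>2m=\dim X$, only indices $0\le i\le 2m$ matter, and for such $i$ the constraint $0\le 2i-j\le 2m$ reads $0\le i\le 2m$ and is automatically satisfied. Hence for every $i$ the theorem yields an isomorphism
\[ \cdot\,h^{2m-i}\colon\ \ A^i_{(i)}(X)\ \xrightarrow{\cong}\ A^{2m}_{(i)}(X)\ .\]
Therefore $A^i_{(i)}(X)=0$ is equivalent to $A^{2m}_{(i)}(X)=0$, where $A^{2m}(X)=A_0(X)$ is the group of zero--cycles. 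It thus remains to prove
\[ A^{2m}_{(i)}(X)=0\ \ \ \hbox{for\ all\ }i\ \hbox{odd}\ ,\]
that is, that the Beauville grading on $A_0(K_m(A))$ is concentrated in even degrees.

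Second, I would attack this zero--cycle vanishing through the motivic description of $K_m(A)$ of Fu--Tian--Vial~\cite{FTV}. For $m=1$ the variety $K_1(A)$ is a $K3$ surface with $H^3=0$ and the statement is immediate; for $m\ge 2$ it is genuine, since $K_m(A)$ then carries nonzero odd cohomology. Via the Hilbert--Chow morphism, $A_0$ of the Hilbert scheme is governed by $A_0(A^{(m+1)})=\operatorname{Sym}^{m+1}A_0(A)$, and the Beauville decomposition of the abelian surface reads $A^2(A)=A^2_{(0)}(A)\oplus A^2_{(1)}(A)\oplus A^2_{(2)}(A)$, with the odd (degree--one) summand $A^2_{(1)}(A)$ being the one carried by $h^3(A)$. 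The only way an odd class can enter $A^{2m}_{(i)}(X)$ is through this degree--one summand, and the passage to the fibre $s^{-1}(0)$ of the addition map $s\colon A^{[m+1]}\to A$ is precisely what removes the degree--one direction. I would therefore aim to show that restriction to $s^{-1}(0)$ annihilates every contribution of $A^2_{(1)}(A)$, leaving $A_0(X)$ supported on the even pieces $A^2_{(0)}(A)=\QQ$ and $A^2_{(2)}(A)$.

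The main obstacle is making this last step rigorous. The difficulty is that, unlike the $K3$ case, the odd cohomology of $K_m(A)$ does not vanish for $m\ge 2$, so the vanishing is not cohomological but genuinely a statement about rational equivalence; it must be extracted from the finite--dimensionality of the motive together with the explicit MCK decomposition of~\cite{FTV}, by tracking the Beauville decomposition of $A_0(A)$ through the symmetric product, the Hilbert--Chow morphism, and the restriction to $s^{-1}(0)$. A cleaner alternative, if available, would be to quote the concentration of the Beauville grading on $A_0(K_m(A))$ in even degrees directly; once that zero--cycle vanishing is in hand, the corollary follows formally from the hard Lefschetz isomorphism established in the first step.
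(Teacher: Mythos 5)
Your first step is exactly the paper's argument: specializing Theorem~\ref{main3} to $j=i$ gives the isomorphism $\cdot h^{2m-i}\colon A^i_{(i)}(X)\xrightarrow{\cong}A^{2m}_{(i)}(X)$, reducing the corollary to the vanishing $A^{2m}_{(i)}(X)=0$ for $i$ odd. The gap is that you do not prove this zero--cycle vanishing. The paper obtains it by quoting a theorem of Lin \cite{Lin} (``On the Chow group of zero--cycles of a generalized Kummer variety''), which is precisely the ``cleaner alternative'' you hoped for at the end; so the missing ingredient is a citation rather than a wrong strategy, but as written your proposal is incomplete, and the vanishing is a substantial result (an entire paper's worth of work), not something one can wave through.

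Your sketched derivation of the vanishing also glosses over real difficulties. The identification of $A_0(A^{[m+1]})_{\QQ}$ with $A_0(A^{(m+1)})_{\QQ}$ via Hilbert--Chow is fine, but the full de Cataldo--Migliorini/Fu--Tian--Vial description of the motive involves all products $A^{\mu}$ indexed by partitions, so the bookkeeping of which Beauville graded pieces of $A^2(A)$ contribute to odd pieces of $A_0$ is more involved than ``only $A^2_{(1)}(A)$ enters''; moreover $K_m(A)=s^{-1}(0)$ is a fibre, and there is no obvious injection $A_0(K_m(A))\hookrightarrow A_0(A^{[m+1]})$ along which one could ``restrict'' and check that the degree--one direction dies. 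Making all of this precise is exactly the content of Lin's paper. If you replace your second step by the citation of \cite{Lin}, your proof coincides with the paper's.
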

  
\begin{proof} Lin \cite{Lin} has proven that
  \[ A^{2m}_{(i)}(X) =0\ \ \ \hbox{for\ all\ } i\ \hbox{odd}\ .\]
  Applying Theorem \ref{main3}, we find there are isomorphisms
  \[ A^i_{(i)}(X)\ \xrightarrow{\cong}\ A^{2m}_{(i)}(X)\ \ \ \hbox{for\ all\ }i\ .\]
  This proves the corollary.
 \end{proof}  

  \begin{remark} Corollary \ref{cor3} can be seen as a ``motivic'' manifestation of the fact that $H^i(X,\OO_X)=0$ for $i$ odd (as such, Corollary \ref{cor3} should hold for all hyperk\"ahler varieties).
  
  On a side note, we mention that applying the Bloch--Srinivas method to the above--mentioned result of Lin, one can actually prove
  \[ H^i(X) = N^1 H^i(X)\ \ \ \forall\ i\ \hbox{odd} \]
  (where $N^\ast$ denotes the coniveau filtration \cite{BO}).
  \end{remark}

   \begin{corollary}\label{cor5} Let $X$ be an abelian variety, or the Hilbert scheme $A^{[m]}$ of an abelian surface, or a generalized Kummer variety. Then
   \[  A^i_{(i)}(X)\ \subset\ A^i_{alg}(X)\ \ \ \forall i>0\ .\]
   \end{corollary}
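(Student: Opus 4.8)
The plan is to reduce, via the hard Lefschetz isomorphisms already at our disposal, to a statement about zero-cycles, where algebraic triviality becomes automatic. Write $g=\dim X$ (so $g$ is the dimension of the abelian variety, resp. $g=2m$ in the two hyperk\"ahler cases), and fix an ample class $h$ of the type required by the relevant theorem: a symmetric ample class in the abelian case (Theorem \ref{main1}), a class in $A^1_{(0)}(X)$ in the Hilbert scheme case (Theorem \ref{main2}), an arbitrary ample class in the Kummer case (Theorem \ref{main3}). Specializing the relevant hard Lefschetz statement to $j=i$, and using $g-2i+j=g-i$ together with $g-i+j=g$, yields an isomorphism
\[ \cdot h^{g-i}\colon\ A^i_{(i)}(X)\ \xrightarrow{\cong}\ A^{g}_{(i)}(X)\ .\]
Thus each class in $A^i_{(i)}(X)$ corresponds, under an \emph{explicit} isomorphism, to a zero-cycle lying in the piece $A^g_{(i)}(X)$.

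The first step is to observe that $A^g_{(i)}(X)\subset A^g_{alg}(X)$ for $i>0$. Since the cycle class map is compatible with the CK decomposition, the class of any element of $A^i_{(j)}(X)=(\pi^X_{2i-j})_\ast A^i(X)$ lands in $(\pi^X_{2i-j})_\ast H^{2i}(X)$, which vanishes as soon as $j\neq 0$ (the projector $\pi^X_{2i-j}$ acts on cohomology as the projection onto $H^{2i-j}$). Hence $A^g_{(i)}(X)\subset A^g_{hom}(X)$ for $i>0$. But $A^g_{hom}(X)$ is exactly the group of zero-cycles of degree $0$ (as $X$ is connected), and every zero-cycle of degree $0$ on a connected smooth projective variety over $\C$ is algebraically trivial: any two points are joined by an irreducible curve, and a degree-$0$ divisor on a curve is algebraically trivial. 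Therefore $A^g_{(i)}(X)\subset A^g_{alg}(X)$.

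To transport this back down to $A^i_{(i)}(X)$, I would use that the inverse of the hard Lefschetz isomorphism is the action of a genuine self-correspondence of $X$. Indeed, inspecting the proof of Theorem \ref{main0}, the inverse of $\cdot h^{g-i}$ is $(\pi^X_{i}\circ C^{g-i})_\ast$, where $C^{g-i}\in A^{i}(X\times X)$ is the correspondence furnished by the Lefschetz standard conjecture $B(X)$. Because the action of any correspondence preserves algebraic triviality (algebraic equivalence being stable under the flat pullbacks, intersection products, and proper pushforwards that compose to define the action), applying this inverse to the inclusion of the previous step gives
\[ A^i_{(i)}(X) = (\pi^X_{i}\circ C^{g-i})_\ast\, A^g_{(i)}(X)\ \subset\ (\pi^X_{i}\circ C^{g-i})_\ast\, A^g_{alg}(X)\ \subset\ A^i_{alg}(X)\ ,\]
which is the desired inclusion.

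I expect the only genuinely delicate point to be the bookkeeping of this inverse: one must be sure the inverse of $\cdot h^{g-i}$ is realized by an \emph{algebraic} self-correspondence, so that it actually transports algebraic triviality, rather than being a mere abstract inverse of $\QQ$-vector spaces. This is precisely what the proof of Theorem \ref{main0} provides, via $C^{g-i}$. Everything else—the homological triviality of the positive-weight pieces and the algebraic triviality of degree-$0$ zero-cycles—is formal, so once the correspondence-theoretic input is in place the corollary follows uniformly in all three cases.
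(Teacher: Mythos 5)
Your proposal is correct and follows essentially the same route as the paper: specialize the hard Lefschetz isomorphism to $j=i$ to land in $A^{\dim X}_{(i)}(X)$, observe these zero-cycles are homologically (hence algebraically) trivial for $i>0$, and transport back via the correspondence inverting $\cdot h^{\dim X - i}$, which preserves algebraic triviality. The only difference is that you spell out the intermediate justifications (why $A^{\dim X}_{(i)}(X)\subset A^{\dim X}_{hom}(X)$ and why degree-zero zero-cycles are algebraically trivial) that the paper leaves implicit.
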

   
   \begin{proof} Let $n:=\dim X$. The proof of Theorem \ref{main0} implies there exists a correspondence $C$ such that
   \[ A^i_{(i)}(X)\ \xrightarrow{ \cdot h^{n-i}}\ A^{n}_{(i)}(X)\ \xrightarrow{C_\ast}\ A^i_{(i)}(X) \]
   is the identity. But for $i>0$, any cycle $b\in A^{n}_{(i)}(X)$ is homologically trivial hence algebraically trivial. The action of correspondences preserves algebraic triviality, and so $C_\ast(b)$ is algebraically trivial.
   \end{proof}
   
   \begin{remark} Nori has conjectured \cite{Nor} that for any smooth projective variety $X$, one should have
   \[ A^2_{AJ}(X)\ \stackrel{??}{\subset}\ A^2_{alg}(X)\ .\]
   More generally, Jannsen has conjectured \cite{J3} that
   \[ F^i A^i(X) \ \stackrel{??}{\subset}\ A^i_{alg}(X)\ \ \ \forall i> 0\ ,\]  
   where $F^\ast A^\ast$ denotes the conjectural Bloch--Beilinson filtration.
    
   Corollary \ref{cor5} fits in nicely with these conjectures; indeed, it is expected \cite{SV} that for varieties $X$ with an MCK decomposition one has equalities
   \[ A^i_{(i)}(X)\stackrel{??}{=} F^i A^i(X)\ \ \ \forall i,\]
   and in particular  
   \[ A^2_{AJ}(X)\stackrel{??}{=}   A^2_{AJ}(X)\ .\]
   \end{remark}
   
  Before stating the next corollary, we first recall a definition:
  
   \begin{definition}[Voevodsky \cite{Voe}]\label{sm} Let $Y$ be a smooth projective variety. A cycle $a\in A^j(Y)$ is called {\em smash--nilpotent\/} 
if there exists $m\in\NN$ such that
  \[ \begin{array}[c]{ccc}  a^m:= &\undermat{(m\hbox{ times})}{a\times\cdots\times a}&=0\ \ \hbox{in}\  A^{mj}(Y\times\cdots\times Y)_{}\ .
  \end{array}\]
  \vskip0.6cm

Two cycles $a,a^\prime$ are called {\em smash--equivalent\/} if their difference $a-a^\prime$ is smash--nilpotent. 
We will write 
    \[A^j_\otimes(Y)\subset A^j(Y)\] 
  for the subgroup of smash--nilpotent cycles.
\end{definition}

\begin{conjecture}[Voevodsky \cite{Voe}]\label{voe} Let $Y$ be a smooth projective variety. Then
  \[  A^j_{hom}(Y)\ \subset\ A^j_\otimes(Y)\ \ \ \hbox{for\ all\ }j\ .\]
  \end{conjecture}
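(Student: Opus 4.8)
This is Voevodsky's smash--nilpotence conjecture, which is open in full generality; I can only lay out a strategy and indicate precisely where it breaks down. First I would record the elementary half: any $a\in A^j_\otimes(Y)$ is automatically homologically trivial, since by the K\"unneth formula the cohomology class of $a\times\cdots\times a$ equals $[a]^{\otimes m}$, and this vanishes only if $[a]=0$. Thus Conjecture \ref{voe} is really the assertion of the reverse inclusion $A^j_{hom}(Y)\subset A^j_\otimes(Y)$, i.e. that homological and smash equivalence coincide on $A^j(Y)$. The plan is to prove this inclusion in two layers, according to the position of the class in the conjectural filtration.

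The algebraically trivial layer is already available: a theorem of Voevodsky, reproven by Kimura, gives $A^j_{alg}(Y)\subset A^j_\otimes(Y)$ unconditionally, the vanishing cycle $a\times\cdots\times a$ being produced from a curve (or a suitable decomposition of the diagonal) witnessing algebraic triviality. So the task reduces to classes that are homologically but not algebraically trivial --- morally the Griffiths group and its higher codimension analogues. For these I would pass to $\MM_{\rm rat}$ and regard a class $a\in A^j_{hom}(Y)$ as a morphism $\mathbf 1\to h^{2j}(Y)(j)$ landing in the homologically trivial summand, then try to exploit finite--dimensionality: if $h(Y)$ is finite--dimensional in Kimura's sense, the nilpotence theorem (Theorem \ref{nilp}) shows that numerically trivial self--correspondences are nilpotent under composition, and one would hope to convert this into nilpotence of $a$ under external product.

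The main obstacle is exactly this conversion, and it is the reason the conjecture remains open. Kimura's theorem controls the composition product $\Gamma^{\circ N}=0$ of endomorphisms of a motive, whereas smash--nilpotence is a statement about the external (tensor) product $a\times\cdots\times a$ on the powers $Y\times\cdots\times Y$; there is no formal device turning composition--nilpotence of a correspondence into tensor--nilpotence of a single cycle class. Indeed, even for abelian varieties, which have finite--dimensional motive (Theorem \ref{nilp} applies), the conjecture is presently known only in restricted (co)dimension, such as Sebastian's theorem for $1$--cycles, and no argument is known that upgrades finite--dimensionality to the full smash--nilpotence of every numerically trivial class. I would therefore expect genuine progress to require either a new geometric construction producing the vanishing of $a\times\cdots\times a$ directly, in the spirit of the algebraically trivial case, or a strengthening of Kimura's framework in which tensor--nilpotence of the numerically trivial ideal is built into the notion of finite--dimensionality. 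Absent such an input the honest conclusion is that Conjecture \ref{voe} cannot be established here; in this note it is invoked only as a conjecture, while the unconditional inclusion $A^j_{alg}(Y)\subset A^j_\otimes(Y)$ supplies what is actually needed for the corollaries on the extremal pieces $A^{i+1}_{(i)}(X)$.
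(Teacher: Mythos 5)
Your assessment is correct: the statement labelled Conjecture \ref{voe} is Voevodsky's smash--nilpotence conjecture, which the paper states without proof, so there is no proof to compare against; your account of its open status, the elementary converse inclusion, and the obstruction to converting Kimura-type composition--nilpotence into tensor--nilpotence all match how the paper treats it. One correction to your closing sentence: what Corollary \ref{cor6} actually uses is not merely the unconditional inclusion $A^j_{alg}(Y)\subset A^j_\otimes(Y)$ (which by itself settles only divisors and $0$--cycles), but the full $1$--cycle case $A^{n-1}_{hom}(X)=A^{n-1}_\otimes(X)$ of the conjecture --- equality (\ref{num}) --- supplied by Sebastian's theorem \cite{Seb} for abelian varieties and by the argument of \cite[Theorem 3.17]{V3} for the Hilbert--scheme and generalized Kummer cases.
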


\begin{remark} It is known that $A^j_{alg}(Y)\subset A^j_\otimes(Y)$ for any $Y$ \cite{Voe}, \cite{V9}. In particular, Conjecture \ref{voe} is true for divisors ($j=1$) and for $0$--cycles ($j=\dim Y$).

It is known \cite[Th\'eor\`eme 3.33]{An} that Conjecture \ref{voe} implies (and is strictly stronger than) Kimura's finite--dimensionality conjecture. For partial results concerning Conjecture \ref{voe}, cf. \cite{KS}, \cite{Seb2}, \cite{Seb}, \cite[Theorem 3.17]{V3}, \cite{moismash}.
\end{remark}
   
 Using our hard Lefschetz result, we can prove certain pieces $A^i_{(j)}$ are smash--nilpotent:  
 
 \begin{corollary}\label{cor6} Let $X$ be an abelian variety, or the Hilbert scheme $A^{[m]}$ of an abelian surface, or a generalized Kummer variety. Then
   \[  A^{i+1}_{(i)}(X)\ \subset\ A^i_{\otimes}(X)\ \ \ \forall i>0\ .\]
   \end{corollary}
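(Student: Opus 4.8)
The plan is to imitate the proof of Corollary \ref{cor5}, but to land one codimension away from the $0$-cycles. Write $n:=\dim X$ (so $n=2m$ in the Hilbert scheme and Kummer cases), and fix $i$ with $0<i\le n-2$; outside this range the source group vanishes because of the constraint $0\le 2(i+1)-i\le n$. Applying Theorem \ref{main0} (in the guise of Theorems \ref{main1}, \ref{main2}, \ref{main3}) to the pair $(i+1,i)$, I get an isomorphism
\[ \cdot h^{\,n-i-2}\colon\ A^{i+1}_{(i)}(X)\ \xrightarrow{\cong}\ A^{n-1}_{(i)}(X) \]
whose inverse is induced by a correspondence $C$. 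Hence any $a\in A^{i+1}_{(i)}(X)$ can be written $a=C_\ast(b)$ with $b:=a\cdot h^{\,n-i-2}\in A^{n-1}_{(i)}(X)$. Since $i>0$, the piece $A^{n-1}_{(i)}(X)$ lies in $A^{n-1}_{hom}(X)$, i.e. $b$ is a homologically trivial $1$-cycle. As the action of correspondences preserves smash-nilpotence, it suffices to prove that every homologically trivial $1$-cycle on $X$ is smash-nilpotent: then $b$ is smash-nilpotent, and so is $a=C_\ast(b)$.

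So the whole matter reduces to the inclusion $A^{n-1}_{hom}(X)\subset A^{n-1}_\otimes(X)$. For $X$ an abelian variety this is exactly the known (and nontrivial) fact that homologically trivial $1$-cycles on an abelian variety are smash-nilpotent \cite{KS}, \cite{Seb}; note that, in contrast with $0$-cycles, such cycles need \emph{not} be algebraically trivial (Ceresa-type cycles), so the general implication $A^\ast_{alg}\subset A^\ast_\otimes$ of \cite{Voe}, \cite{V9} does not by itself suffice. For the Hilbert scheme $A^{[m]}$ and for $K_m(A)$, I would deduce the statement from the abelian case using the explicit motivic splitting underlying Theorems \ref{hilbk} and \ref{ftv}: the motive of $X$ is a direct summand of a finite sum $\bigoplus_\nu h(B_\nu)(c_\nu)$, with each $B_\nu$ a product of copies of $A$ and $c_\nu$ a Tate twist \cite{V6}, \cite{FTV}. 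Passing to Chow groups and tracking the twists, $A^{n-1}(X)=A_1(X)$ becomes a direct summand of a sum of groups $A_\ast(B_\nu)$, and the decisive point is a dimension count: only summands of cycle-dimension $\le 1$ can occur (the low-dimensional strata contribute groups of negative virtual dimension, hence zero). Thus homologically trivial $1$-cycles on $X$ are governed by homologically trivial $0$- and $1$-cycles on abelian varieties, and both are smash-nilpotent — the former classically ($A^{\dim}_{hom}=A^{\dim}_{alg}\subset A^{\dim}_\otimes$), the latter again by \cite{KS}, \cite{Seb}. This establishes $A^{n-1}_{hom}(X)\subset A^{n-1}_\otimes(X)$ and completes the argument.

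The genuinely hard, and entirely external, ingredient is the smash-nilpotence of homologically trivial $1$-cycles on abelian varieties; everything else is formal bookkeeping with the MCK decomposition and with the splitting of $h(X)$ into abelian pieces. The two points I would verify with care are: first, that the hard Lefschetz isomorphism of Theorem \ref{main0} is realized by an honest correspondence (so that $C_\ast$ preserves $\otimes$-triviality), which is explicit in its proof; and second, that the twist bookkeeping really does confine the abelian summands of $A_1(X)$ to cycle-dimension $\le 1$, which is precisely the range where the Voevodsky conjecture (Conjecture \ref{voe}) is available for abelian varieties.
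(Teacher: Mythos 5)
Your argument coincides with the paper's in all essentials: the same hard Lefschetz retraction
\[ A^{i+1}_{(i)}(X)\ \xrightarrow{\ \cdot h^{n-2-i}\ }\ A^{n-1}_{(i)}(X)\ \xrightarrow{\ C_\ast\ }\ A^{i+1}_{(i)}(X)\ , \]
the same reduction to the inclusion $A^{n-1}_{hom}(X)\subset A^{n-1}_\otimes(X)$ for $1$--cycles, the same observation that correspondences preserve smash--nilpotence, and the same external input (Sebastian, Kahn--Sebastian) for homologically trivial $1$--cycles on abelian varieties. The only real difference is presentational: for $A^{[m]}$ and $K_m(A)$ the paper disposes of $A^{n-1}_{hom}\subset A^{n-1}_\otimes$ by simply invoking ``the argument of \cite[Theorem 3.17]{V3}'', whereas you reconstruct that argument explicitly (the motive is a summand of twisted motives of abelian varieties, and the Tate--twist bookkeeping confines the contributing Chow groups of the abelian pieces to cycle dimension $\le 1$, where Voevodsky's conjecture is known) --- which is precisely what Vial's proof does, so this is an unpacking of a citation rather than a new route. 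One small slip: your assertion that $A^{i+1}_{(i)}(X)$ vanishes for $i>n-2$ is unjustified at $i=n-1$, since there is no degree reason for $A^{n}_{(n-1)}(X)$ to be zero; that edge case is harmless, however, because homologically trivial $0$--cycles are algebraically trivial and hence smash--nilpotent (the paper's own proof is equally silent on it, as the exponent $n-2-i$ becomes negative there).
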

   
   \begin{proof} Let $n:=\dim X$. We first observe that Voevodsky's conjecture holds for $1$--cycles on $X$, i.e.
   \begin{equation}\label{num} A^{n-1}_{hom}(X) = A^{n-1}_\otimes(X)\ .\end{equation}
   For abelian varieties, this is proven by Sebastian \cite{Seb}. For Hilbert schemes of abelian surfaces and for generalized Kummer varieties, one can apply the argument of \cite[Theorem 3.17]{V3} to prove equality (\ref{num}).
   
   The proof of Theorem \ref{main0} implies there exists a correspondence $C$ such that
   \[ A^{i+1}_{(i)}(X)\ \xrightarrow{ \cdot h^{n-2-i}}\ A^{n-1}_{(i)}(X)\ \xrightarrow{C_\ast}\ A^{i+1}_{(i)}(X) \]
   is the identity. But for $i>0$, any cycle $b\in A^{n-1}_{(i)}(X)$ is homologically trivial, hence (using equality \ref{num})) smash--nilpotent. The action of correspondences preserves smash--nilpotence (since it is known smash--equivalence is an adequate equivalence relation), and so $C_\ast(b)$ is smash--nilpotent. This proves the corollary.
   \end{proof}

  \section{Other cases}
  
  \begin{remark} Some other cases to which Theorem \ref{main0} applies are: the Hilbert square $A^{[2]}$ of an abelian variety $A$ (this has an MCK decomposition thanks to \cite{SV}), the Hilbert cube $A^{[3]}$ of an abelian variety $A$ (this has an MCK decomposition thanks to \cite[Theorem 1]{SV2}), the nested Hilbert schemes $A^{[1,2]}$ and $A^{[2,3]}$ of an abelian variety $A$ (these have an MCK decomposition thanks to \cite[Theorem 6.1]{SV2}; the other hypotheses are satisfied thanks to the explicit description given in \cite[Section 6]{SV2}), the Hilbert scheme $S^{[m]}$ where $S$ is a $K3$ surface with finite--dimensional motive (this has an MCK decomposition thanks to \cite{V6}; the other hypotheses are satisfied thanks to \cite{CM}). 
  
(In addition, let $X=K_m(A)$ be a generalized Kummer variety. {\em If\/} it can be proven that the MCK decomposition of $X$ obtained in \cite{FTV} is self--dual, then it follows from \cite{SV2} that the Hilbert schemes $X^{[2]}$ and $X^{[3]}$ and $X^{[1,2]}$ and $X^{[2,3]}$ also have an MCK decomposition. In this case, these varieties can be added to the above list of cases to which Theorem \ref{main0} applies.)
   \end{remark}

\vskip0.6cm

\begin{acknowledgements} I am grateful to Charles Vial for helpful email discussions, and to the referee for pertinent remarks that helped improve the presentation.
Thanks to Len, Kai and Yasuyo for successfully turning my mind away from work as soon as I leave my office. 
 \end{acknowledgements}

\vskip0.6cm


\begin{thebibliography}{dlPG99}



\bibitem{An} Y. Andr\'e, Motifs de dimension finie (d'apr\`es S.-I. Kimura, P. O'Sullivan,...), S\'eminaire Bourbaki 2003/2004, Ast\'erisque 299 Exp. No. 929, viii, 115---145,

\bibitem{Beau0} A. Beauville, Some remarks on K\"ahler manifolds with $c_1=0$, in: Classification of algebraic and analytic manifolds (Katata, 1982), Birkh\"auser Boston, Boston 1983,

\bibitem{Beau1} A. Beauville, Vari\'et\'es K\"ahleriennes dont la premi\`ere classe de Chern est nulle, J. Differential Geom. 18 no. 4 (1983), 755---782,

\bibitem{Beau} A. Beauville, Sur l'anneau de Chow d'une vari\'et\'e ab\'elienne, Math. Ann. 273 (1986), 647---651,

\bibitem{Beau3} A. Beauville, On the splitting of the Bloch--Beilinson filtration, in: Algebraic cycles and motives (J. Nagel and C. Peters, editors), London Math. Soc. Lecture Notes 344, Cambridge 
University Press 2007,





\bibitem{BO} S. Bloch and A. Ogus, Gersten's conjecture and the homology of schemes, Ann. Sci. Ecole Norm. Sup. 4 (1974), 181---202,

\bibitem{BS} S. Bloch and V. Srinivas, Remarks on correspondences and algebraic cycles, American Journal of Mathematics Vol. 105, No 5 (1983), 1235---1253,

\bibitem{CM} 
M. de Cataldo and L. Migliorini, The Chow groups and the motive of the Hilbert scheme of points on a
surface, Journal of Algebra 251 no. 2 (2002), 824---848,

\bibitem{CM2} M. de Cataldo and L. Migliorini, The hard Lefschetz theorem and the topology of semismall maps, Ann. Sci. Ecole Norm. Sup. 35 no. 5 (2002), 759---772,

\bibitem{Cat} E. Cattani, Mixed Lefschetz theorems and Hodge--Riemann bilinear relations, Int. Math. Research Notices 2008 (2008), 

\bibitem{DM} C. Deninger and J. Murre, Motivic decomposition of abelian schemes and the Fourier transform. J. reine u.
angew. Math. 422 (1991), 201---219,

\bibitem{LFu} L. Fu, Beauville-Voisin conjecture for generalized Kummer varieties, International Mathematics Research Notices 12 (2015), 3878---3898,


\bibitem{FTV} L. Fu, Z. Tian and C. Vial, Motivic hyperk\"ahler resolution conjecture for generalized Kummer varieties, arXiv:1608.04968,


\bibitem{Iv} F. Ivorra, Finite dimensional motives and applications (following S.-I. Kimura, P. O'Sullivan and others), in:
        Autour des motifs, Asian-French summer school on algebraic geometry and number theory,
       Volume III, Panoramas et synth\`eses, Soci\'et\'e math\'ematique de France 2011,

\bibitem{J2} U. Jannsen, Motivic sheaves and filtrations on Chow groups, in: Motives (U. Jannsen et alii, eds.), Proceedings of Symposia in Pure Mathematics Vol. 55 (1994), Part 1,  

\bibitem{J3} U. Jannsen, Equivalence relations on algebraic cycles, in: The arithmetic and geometry
of algebraic cycles, Proceedings of the Banff Conference 1998 (B. Gordon et alii, eds.), Kluwer,

\bibitem{J4} U. Jannsen, On finite--dimensional motives and Murre's conjecture, in: Algebraic cycles and motives (J. Nagel and C. Peters, eds.), Cambridge University Press, Cambridge 2007,

\bibitem{KS} B. Kahn and R. Sebastian, Smash--nilpotent cycles on abelian 3--folds, Math. Res. Letters 16 (2009), 1007---1010,

\bibitem{Kim} S. Kimura, Chow groups are finite dimensional, in some sense,
Math. Ann. 331 (2005), 173---201,

\bibitem{K} S. Kleiman, Algebraic cycles and the Weil conjectures, in: Dix expos\'es sur la cohomologie des sch\'emas, North--Holland Amsterdam, 1968, 359---386, 

\bibitem{K2} S. Kleiman, The standard conjectures, in: Motives (U. Jannsen et alii, eds.), Proceedings of Symposia in Pure Mathematics Vol. 55 (1994), Part 1, 

\bibitem{Kun} K. K\"unnemann, A Lefschetz decomposition for Chow motives of abelian schemes, Inv. Math. 113 (1993), 85---102,

\bibitem{moismash} R. Laterveer, Some new examples of smash--nilpotent algebraic cycles, to appear in Glasgow Math. Journal,

\bibitem{Lin} H.-Y. Lin, On the Chow group of zero--cycles of a generalized Kummer variety, Advances in Mathematics 298 (2016),  448---472,

\bibitem{Mur} J. Murre, On a conjectural filtration on the Chow groups of an algebraic variety, parts I and II, Indag. Math. 4 (1993), 177---201,

\bibitem{MNP} J. Murre, J. Nagel and C. Peters, Lectures on the theory of pure motives, Amer. Math. Soc. University Lecture Series 61, Providence 2013,

\bibitem{Nor} M. Nori, Algebraic cycles and Hodge--theoretic connectivity, Invent. math. 111 no. 2 (1993), 349---373,



\bibitem{Rie} U. Rie\ss, On the Chow ring of birational irreducible symplectic varieties, Manuscripta Math. 145 (2014), 473---501,

\bibitem{Sc} T. Scholl, Classical motives, in: Motives (U. Jannsen et alii, eds.), Proceedings of Symposia in Pure Mathematics Vol. 55 (1994), Part 1, 


\bibitem{Seb} R. Sebastian, Smash nilpotent cycles on varieties dominated by products of curves, Comp. Math. 149 (2013), 1511---1518,

\bibitem{Seb2} R. Sebastian, Examples of smash nilpotent cycles on rationally connected varieties, Journal of Algebra 438 (2015), 119---129,


\bibitem{SV} M. Shen and C. Vial, The Fourier transform for certain hyperK\"ahler fourfolds, Memoirs of the AMS 240 (2016), no.1139,

\bibitem{SV2} M. Shen and C. Vial, The motive of the Hilbert cube $X^{[3]}$, to appear in Forum Math Sigma, arXiv:1503.00876, 


\bibitem{V3} C. Vial, Remarks on motives of abelian type, to appear in Tohoku Math. J.,



\bibitem{V6} C. Vial, On the motive of some hyperk\"ahler varieties, to appear in J. f\"ur Reine u. Angew. Math.,

\bibitem{Voe} V. Voevodsky, A nilpotence theorem for cycles algebraically equivalent to zero, Internat. Math. Research Notices 4 (1995), 187---198,

\bibitem{V9} C. Voisin, Remarks on zero--cycles of self--products of varieties, in: Moduli of vector bundles, Proceedings of the Taniguchi Congress  (M. Maruyama,  ed.), Marcel Dekker New York Basel Hong Kong 1994,












\bibitem{Xu} Z. Xu, Algebraic cycles on a generalized Kummer variety, arXiv:1506.04297v1.




\end{thebibliography}
\end{document}